\numberwithin{equation}{section}
\newtheorem{theorem}{Theorem}[section]
\newtheorem{proposition}[theorem]{Proposition}
\newtheorem{lemma}[theorem]{Lemma}
\theoremstyle{definition}
\def\XXint#1#2#3{{\setbox0=\hbox{$#1{#2#3}{\int}$}
     \vcenter{\hbox{$#2#3$}}\kern-.5\wd0}}
\def\p{\partial}
\def\R{\mathbb R}
\newcommand{\ov}{\overline}
\newcommand{\ve }{\varepsilon }
\newcommand{\m}{\mathcal}
\begin{document}

 \author{Weiwei Ao \and Yehui Huang  \and Yong Liu \and Juncheng Wei}

\title[Multi vortex ring solutions]{Generalized Adler-Moser Polynomials and Multiple vortex rings for the Gross-Pitaevskii equation}

\date{\today}

\maketitle

\begin{abstract}
	New finite energy traveling wave solutions with small speed are constructed for the three dimensional Gross-Pitaevskii equation
	\begin{equation*}
	i\Psi_t=  \Delta \Psi+(1-|\Psi|^2)\Psi,
	\end{equation*}
	where $\Psi$ is a complex valued function defined on ${\mathbb R}^3\times{\mathbb R}$. These solutions have the shape of $2n+1$ vortex rings, far away from each other. Among these vortex rings,  $n+1$ of them have positive orientation and the other $n$ of them have negative orientation. The location of these rings are described by the roots of a sequence of polynomials with rational coefficients. The  polynomials found here can be regarded as a generalization of the classical Adler-Moser polynomials and can be expressed as the Wronskian of certain very special functions. The techniques used in the derivation of these polynomials should have independent interest.
\end{abstract}

\section{Introduction}\label{sec1}


In this paper, we are interested in the existence of solutions with the shape of multiple vortex rings, to
the nonlinear Schr\"odinger type problem
\begin{eqnarray}\label{GrossPitaevskii}
i\Psi_t\,= \triangle \Psi+\Big(1-|\Psi|^2\Big)\Psi,
\end{eqnarray}
where $\triangle=\partial^2_{y_1}+\partial^2_{y_2}+\partial^2_{y_3}$
is the Laplacian operator in ${\mathbb R}^3$.
Equation (\ref{GrossPitaevskii}), usually called Gross-Pitaevskii equation (GP),
is a well-known mathematical model arising in various physical contexts such as nonlinear optics and Bose-Einstein condensates, see for instance  \cite{pethicksmith}.

Traveling wave solutions of the GP equation play important role in its long time dynamics. If $\Psi$ is a traveling wave type solution of the form
$$
\Psi(y,t)
\,=\,
{\tilde u}\big(y_1,\,y_2,\, y_3- ct\big),
$$
then ${\tilde u({\tilde y}_1,{\tilde y}_2,{\tilde y}_3)}$ will be a solution of the nonlinear elliptic problem
\begin{eqnarray}\label{TWK0}
\,-\,i\,c\,\frac{\partial\tilde u}{\partial {\tilde y}_3}
\,=\,
\triangle{\tilde u}
\,+\,
\Big(1-|{\tilde u}|^2\Big){\tilde u}.
\end{eqnarray}

The existence or nonexistence of traveling wave solutions to (\ref{TWK0}) with $\tilde{u}\to 1$ as $|{\tilde y}|\to \infty$ has attracted much attention in the literature, initiated from the work of Jones, Putterman,
Roberts \cite{J,J1}, where they studied the equation from the physical point of view and obtained solutions with formal and numerical calculation. They carried out their computation in dimension two and three, and find that the solution branches  in these two cases have different properties. In particular, in the energy-momentum diagram, the branch in 2D is smooth, while the branch in 3D has a cusp singularity. In any case, the solutions they found have traveling wave speed $c$ less than $\sqrt{2}$ (the sound speed in this context, appears after taking the Madelung transform for the GP equation).

A natural question is whether there exist solutions whose traveling speed is larger than the sound speed. In this respect, the nonexistence of finite energy solutions with $c>\sqrt{2}$
is rigorously proved by Gravejat in \cite{G2,G3}. This result is also true for $c=\sqrt{2}$ in $\mathbb{R}^2$, but the higher dimensional case is still open.

The first rigorous mathematical proof of the existence is carried out in \cite{Ber3}, where solutions in 2D with small traveling speed are obtained using mountain pass theorem. Later on, the existence of small speed solutions in dimension larger than two are proved in \cite{chiron}, also based on the mountain pass theorem. In \cite{Ber3}, a different approach, minimizing the action functional with fixed momentum, is applied to get the existence of solutions with large momentum in dimension $N\geq 3$. This method is further developed in \cite{Ber1} to all dimensions, yielding existence or nonexistence of solutions for any fixed momentum. The asymptotic profile of these solutions are also studied in the above mentioned papers. In particular, for $c$ close to $0,$ in 2D, these solutions have two vortice and around them, the solution is close to the degree one vortex solution of the the Ginzburg-Landau equation; while in 3D, the solutions have the shape of a single vortex ring, see also \cite{ChironP}. We also refer to the paper \cite{bethuelgravejatsaut1} by F. Bethuel, P. Gravejat and J. Saut and the references therein for more details and discussions.

The question of existence for all traveling speed $c\in (0, \sqrt{2})$ is quite delicate. It is proved by Maris in \cite{Mar} that in dimension $N>2$, one can minimize the action under a Pohozaev constraint, obtaining solutions in the full speed interval  $(0, \sqrt{2})$. Unfortunately, this argument breaks down in 2D, thus leaving the problem open in this dimension. Recently, Bellazzini-Ruiz \cite{Ruiz} proved that the existence of almost all subsonic speed in 2D, using mountain pass argument. They also recovered the results of Maris in 3D.

Note that
when the parameter $c=0,$ equation (\ref{TWK0}) reduces to the
Ginzburg-Landau equation:
\begin{equation}
\Delta u+u\left(  1-\left\vert u\right\vert ^{2}\right)  =0. \label{Landau}%
\end{equation}
In $\mathbb{R}^2$, for each $\tau\in\mathbb{Z}\backslash\left\{  0\right\}  ,$ it
is known that the Ginzburg-Landau equation $\left(  \ref{Landau}\right)  $ has
a degree $\tau$ vortex solution. In the polar coordinate, it has the form $S_{\tau}\left(  r\right)  e^{i\tau\theta
}$. The function $S_{\tau}$ is real valued and vanishes exactly at $r=0.$ It
satisfies%
\[
-S_{\tau}^{\prime\prime}-\frac{1}{r}S_{\tau}^{\prime}+\frac{\tau^{2}}{r^{2}}S_{\tau}%
=S_{\tau}\left(  1-S_{\tau}^{2}\right)  \text{ in }\left(  0,+\infty\right)  .
\]
This equation indeed has a unique solution $S_{\tau}$ with $S_{\tau}\left(  0\right)
=0$ and $S_{\tau}\left(  +\infty\right)  =1$ and $S_{\tau}^{\prime}\left(  r\right)
>0.$ See \cite{Fife,Her} for a proof.

Recently, based on the vortex solutions of the Ginzburg-Landau equation, multi-vortex traveling wave solutions to (\ref{TWK0}) were constructed in \cite{Liu-Wei} using Lyapunov-Schmidt reduction method. These solutions have $\frac{n(n+1)}{2}$ pairs of vortex-anti vortex configuration, where the location of the vortex points are determined by the roots of the Adler-Moser Polynomials. It is worth pointing out that the Adler-Moser polynomials arise naturally from the rational solutions of the KdV equation. We also mention that as $c$ tends to $\sqrt{2},$ a
suitable rescaled traveling waves will converge to solutions of the KP-I
equation, which is an important integrable system, see \cite{Ber0, chiron2}. Interestingly, the KP-I equation is actually a two dimensional generalization of the classical KdV equation. Hence in the context of GP equation, we see the KP-I equation in the transonic limit and KdV in the small speed limit. The inherent reason behind this phenomena is still to be explored. As a related result, we would like to mention that numerical simulation has been performed in \cite{chiron3} to illustrate the higher energy solutions of the GP equation.

Denote the degree $\pm1$ vortex solutions of the
Ginzburg-Landau equation $\left(  \ref{Landau}\right)  $ as
\[
v_{+}=e^{i\theta}S_{1}\left(  r\right)  ,v_{-}=e^{-i\theta}S_{1}\left(
r\right)  .
\]
To better explain our main result in this paper, let us recall the following result proved in \cite{Liu-Wei}, which provides a family of multi-vortex solutions in dimension $2$.
\begin{theorem}[\cite{Liu-Wei}]
	\label{main}In $\mathbb{R}^2$, for each $n\leq34,$ there exists $c_{0}>0,$ such that
	for all $c\in \left(  0,c_{0}\right)  ,$ the equation
	$\left(  \ref{TWK0}\right)  $ has a solution $u_{c}$, with
	\[
	u_{c}=\prod\limits_{k=1}^{n\left(  n+1\right)  /2}\left[
	v_{+}\left(  z-c ^{-1}p_{k}\right)  v_{-}\left(  z+c
	^{-1}p_{k}\right)  \right] +o\left(  1\right)  ,
	\]
	where $p_{k}$, $k=1,...,n\left(  n+1\right)  /2$ are roots of the
	Adler-Moser polynomials.
\end{theorem}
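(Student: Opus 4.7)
The plan is to implement a Lyapunov--Schmidt reduction around an explicit multi-vortex ansatz whose centers are prescribed by the roots of the Adler--Moser polynomials, and then to verify that the resulting finite-dimensional reduced system is solved precisely by those roots. Let $N = n(n+1)/2$ and let $\{p_k\}_{k=1}^{N}\subset\mathbb{C}$ denote the relevant roots. Working in the complex variable $z = y_1 + i y_2$, I would take as approximate solution the product
\[
U_c(z) \;=\; \prod_{k=1}^{N} v_{+}\!\bigl(z - c^{-1}p_k\bigr)\, v_{-}\!\bigl(z + c^{-1}p_k\bigr),
\]
so that the $2N$ vortex centers are separated by distances of order $c^{-1}$. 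A direct computation shows that the error $E(U_c) := \Delta U_c + i c\, \partial_{y_2} U_c + (1-|U_c|^2)U_c$ consists of a small transport contribution of size $O(c)$ plus pairwise interaction terms; because each factor solves the Ginzburg--Landau equation (\ref{Landau}), these interaction terms decay like inverse powers of the vortex separation and must be measured in weighted norms adapted to the slow, logarithmic behaviour of $v_\pm$ at infinity.

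Next I would linearize (\ref{TWK0}) at $U_c$. The linearization of (\ref{Landau}) at a single degree $\pm 1$ vortex has a two-dimensional bounded kernel generated by the translation modes $\partial_{y_1}v_\pm$ and $\partial_{y_2}v_\pm$ (Mironescu, del Pino--Felmer--Kowalczyk). Transplanting these to $U_c$ yields $4N$ approximate kernel elements for the full linearized operator $\mathcal{L}_c$. Setting up suitable weighted H\"older spaces and projecting orthogonally to these modes, one can prove an a priori estimate showing that $\mathcal{L}_c$ admits a right inverse that is uniformly bounded in $c$. A standard contraction-mapping argument then produces, for each admissible perturbation of the vortex locations, a correction $\phi$ such that $U_c + \phi$ solves (\ref{TWK0}) up to a linear combination of the translation modes.

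The reduced system $\mathcal{F}(\xi, c) = 0$ enforces the vanishing of these projections. An expansion of the error against the translation modes gives, at leading order in $c$, an equation of the form
\[
c\,\mathbf{e} \;+\; \sum_{j\neq k} \frac{\sigma_j\,(p_k - p_j)}{|p_k - p_j|^2} \;=\; 0,
\]
where $\sigma_j = \pm 1$ encodes the vortex charge and $\mathbf{e}$ is the fixed transport direction. After the rescaling $\xi_k = c^{-1}p_k$ this is precisely the equilibrium system for a uniformly translating Kirchhoff point-vortex configuration, which as shown by Bartman and by Aref--Stremler is solved exactly by the roots of the Adler--Moser polynomials.

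The main obstacle is \emph{nondegeneracy}: to apply the implicit function theorem one must show that the Jacobian of $\mathcal{F}$ at the Adler--Moser configuration is invertible on the complement of the obvious translation and rotational symmetry directions. A closed-form argument appears elusive because the Hessian of the point-vortex Hamiltonian at these critical configurations has no simple explicit structure; instead, for each fixed $n$ one evaluates the reduced Jacobian at the algebraically known Adler--Moser roots and checks invertibility by direct computation. This verification has been carried out for $n \le 34$, which is the sole source of the numerical restriction in the statement; the reduction scheme itself extends verbatim once nondegeneracy is established for larger $n$.
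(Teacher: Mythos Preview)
The paper does not give its own proof of this theorem; it is quoted from \cite{Liu-Wei} as background for the three-dimensional construction that follows. Your outline is an accurate sketch of the Lyapunov--Schmidt argument carried out in that reference, and it parallels what the present paper does in Sections~\ref{sec3}--\ref{sec5} for the 3D analogue: product ansatz, error estimate, projected linear and nonlinear problems in weighted H\"older spaces, and a finite-dimensional reduced system whose leading order is the translating point-vortex equilibrium solved by the Adler--Moser roots, with $n\le 34$ coming only from a case-by-case nondegeneracy check.
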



\medskip

In this paper, we construct new traveling waves for $c$ close to
$0$ in 3D. The solutions will have multiple vortex rings.  By our construction below, it turns out that the location of the vortex points are closely related to the following system (Balancing condition):

\begin{equation}
\left\{
\begin{array}
[c]{l}%
{\displaystyle\sum\limits_{j=1,j\neq k}^{m}}
\frac{1}{\mathbf{a}_{k}-\mathbf{a}_{j}}-%
{\displaystyle\sum\limits_{j=1}^{n}}
\frac{1}{\mathbf{a}_{k}-\mathbf{b}_{j}}=-n,\text{ for }k=1,...,m,\\
-%
{\displaystyle\sum\limits_{j=1,j\neq k}^{n}}
\frac{1}{\mathbf{b}_{k}-\mathbf{b}_{j}}+%
{\displaystyle\sum\limits_{j=1}^{m}}
\frac{1}{\mathbf{b}_{k}-\mathbf{a}_{j}}=-m, \text{ for }k=1,...,n.
\end{array}
\right.  \label{Balancecondition}%
\end{equation}Here $\mathbf{a}_j,j=1,...,m, $  $\mathbf{b}_{\ell},\ell=1,...,n$, are complex numbers in the $z=x_1+i x_2$ plane. The integer $m$  actually denotes the number of positively oriented vortex rings and $n$ denotes the number of negatively oriented ones. Moreover, the solvability of our original problem is related to the nondegeneracy of the linearized operator $dF$ of the map $F$ defined by (\ref{mapF}).

The make our construction possible, the solution $\mathbf{a}_j,\mathbf{b}_{\ell}$ to the system (\ref{Balancecondition}) has to satisfy some symmetric properties. We therefore introduce the following condition:
\medskip

\noindent($\mathcal{M}$). $m>n$. The points $\mathbf{a}_j,\mathbf{b}_{\ell}$, $j=1,...,m$, $\ell=1,...,n$ are all distinct. The set of points of $\{\mathbf{a}_1,...,\mathbf{a}_m\}$ and  $\{\mathbf{b}_1,...,\mathbf{b}_n\}$ are both  symmetric with respect to the $x_1$ axis.

We use Lyapunov-Schmidt reduction method to construct multi-vortex ring solutions. Our main result is the following:
\begin{theorem}\label{thm1}
	Suppose $\mathbf{a}_j,\mathbf{b}_{\ell}$, $j=1,...,m$, $\ell=1,...,n$ is a solution of (\ref{Balancecondition}) satisfying condition $\mathcal{M}$ and the linearized operator $dF$ of (\ref{mapF}) is non-degenerate at this solution in the sense defined in Section 5. Then for all $\varepsilon>0 $ sufficiently small, there exists an axially symmetric solution $u= u(\sqrt{y_1^2+y_2^2}, y_3)$ to equation (\ref{oure}), and $u$ has $m$ positively oriented vortex rings and  $n$ negatively oriented vortex rings. The distance of the vortex rings to the axis is of the order $O(\varepsilon^{-1})$, while the mutual distance of two vortex rings are of the order $O(\varepsilon^{-1}/|\ln \varepsilon)|$. After scaling back by the factor $\varepsilon |\ln \varepsilon|$, the position of the vortex rings in the $(x_1,x_2)$ plane is close to suitable $x_1$-translation of those points $\{\mathbf{a}_1,...,\mathbf{a}_m,\mathbf{b}_1,...,\mathbf{b}_n\}$.
\end{theorem}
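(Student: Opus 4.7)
\textbf{The plan is to adapt the Lyapunov-Schmidt scheme of \cite{Liu-Wei} to the axially symmetric 3D setting, replacing point vortices by vortex rings.} Working in the half-plane coordinates $(s,y_3)=(\sqrt{y_1^2+y_2^2},y_3)$, the equation for an axially symmetric traveling wave $u(s,y_3)$ takes the form of a planar Ginzburg-Landau equation with the curvature perturbation $-s^{-1}\partial_s u$ and the transport term $ic\,\partial_{y_3}u$. I would first build an approximate solution
\[
U_\ast=\prod_{j=1}^{m}v_{+}(z-P_j^{+})\cdot\prod_{\ell=1}^{n}v_{-}(z-P_\ell^{-}),
\]
with $P_j^{+}=(R,0)+\rho\,\mathbf{a}_j$ and $P_\ell^{-}=(R,0)+\rho\,\mathbf{b}_\ell$, where the ring radius $R\asymp\varepsilon^{-1}$, the inter-ring spacing $\rho\asymp\varepsilon^{-1}|\ln\varepsilon|^{-1}$, and the speed is coupled as $c\asymp\varepsilon|\ln\varepsilon|$. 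Condition $(\m M)$ makes $U_\ast$ invariant under the reflection $y_3\mapsto -y_3$, which is essential to cancel the imaginary part of the reduced equations.

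\textbf{The bulk of the work lies in the linear theory.} Freezing the ring locations, the linearization $L_\varepsilon$ about $U_\ast$ is, near each vortex $P_k^{\pm}$, a small perturbation of the Ginzburg-Landau linearized operator about $v_{\pm}$, whose kernel is known to be spanned by the two translation modes $\partial_{x_1}v_{\pm}$ and $\partial_{x_2}v_{\pm}$. I would introduce weighted norms adapted to the three scales -- the inner vortex scale of size one, the mutual scale $\rho$, and the ring-radius scale $R$ -- and prove an a priori estimate $\|\phi\|_\ast\le C\|L_\varepsilon\phi\|_{\ast\ast}$ for $\phi$ orthogonal to the $2(m+n)$ translation modes, via a blow-up/contradiction argument that exploits the symmetry imposed by $(\m M)$. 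A contraction mapping argument in this weighted space then produces, for each admissible ring configuration, a correction $\phi$ solving the nonlinear equation projected onto the orthogonal complement of the kernel.

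\textbf{The reduced finite-dimensional problem comes from pairing the full equation with the translation modes.} A careful expansion shows that the projection splits into three leading contributions: the curvature term $-s^{-1}\partial_sU_\ast$ produces, after rescaling, a constant force on each vortex that yields the $-n$ (respectively $-m$) on the right-hand side of (\ref{Balancecondition}); the pairwise vortex interaction at distance $\rho$ produces the logarithmic-derivative terms $(\mathbf{a}_k-\mathbf{a}_j)^{-1}$ and $-(\mathbf{a}_k-\mathbf{b}_j)^{-1}$; and the transport term $ic\,\partial_{y_3}U_\ast$ is absorbed by the specific scaling of $c$. Equating the leading-order projection to zero recovers the system (\ref{Balancecondition}), whose linearization at the given solution is by definition $dF$. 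The non-degeneracy hypothesis then lets the implicit function theorem produce an exact solution of the reduced system close to $(\mathbf{a}_j,\mathbf{b}_\ell)$, yielding a genuine traveling wave.

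\textbf{The main obstacle is the uniform invertibility of $L_\varepsilon$ modulo its $2(m+n)$-dimensional approximate kernel}, because the ring curvature, the inter-ring interaction and the traveling-wave drift all enter at comparable logarithmic orders and must be separated by a delicate matched asymptotic expansion across the three scales. Condition $(\m M)$ is indispensable for ruling out spurious kernel elements generated by the global Euclidean symmetries. A secondary difficulty is controlling the $O(|\ln\varepsilon|^{-1})$ subprincipal corrections to the reduced map so that they do not destroy the non-degeneracy, thereby ensuring that $dF$ genuinely governs the solvability of the reduced problem.
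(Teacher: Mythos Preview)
Your overall scheme---Lyapunov--Schmidt reduction, weighted linear theory \`a la \cite{Liu-Wei}, and identification of the reduced system with (\ref{Balancecondition})---matches the paper. But there is a genuine gap in your choice of approximate solution, and it is precisely the new difficulty in 3D.

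The naive product $U_\ast=\prod v_{+}(z-P_j^{+})\prod v_{-}(z-P_\ell^{-})$ does not work. First, axial symmetry forces the Neumann condition $\partial_{x_1}u(0,x_2)=0$ together with $u\to 1$ at infinity; to meet these you must include mirror anti-vortices at the reflected points $p_j^\ast=-\bar p_j$, so that each ring is represented by a vortex/anti-vortex pair. Second---and this is the crucial point the paper isolates---even with the mirror images the error picks up, through the curvature term $\tfrac{1}{x_1}\partial_{x_1}$, an imaginary contribution $\tfrac{i}{x_1}\partial_{x_1}\varphi_0$ whose decay away from the vortex cores is only $O(r_j^{-2})$. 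This is too slow for the $\|\cdot\|_{\ast\ast}$ norm, so the fixed-point step in your ``bulk of the work'' would not close. The paper's remedy is to replace the planar phase $\theta_j-\theta_{\mathcal K+j}$ by the \emph{reference vortex ring} phase $\varphi_{p_j}$ built from the potential $A_a$ solving $-\mathrm{div}\big(\tfrac{1}{x_1}\nabla(x_1A_a)\big)=2\pi\delta_a$ in the half-plane (expressible via complete elliptic integrals). With this refined ansatz $\mathcal U=U\,e^{i\tilde\varphi_{\mathbf p}}$ the offending term cancels exactly because $\Delta\varphi_d+\tfrac{1}{x_1}\partial_{x_1}\varphi_d=0$, and the error regains the decay needed for the linear theory.

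Two smaller points. Your account of the reduced system is slightly off: the right-hand sides $-n$ and $-m$ do not come from the curvature term alone; rather, the transport term $\tau_j|\ln\varepsilon|$ and the curvature contribution $2\ln\varepsilon/\tilde p_{j,1}$ enter at the same order, and their balance fixes the common radius $\alpha_0=2(m+n)/(m-n)$, after which the balancing system emerges at the next order. Finally, $dF$ is never invertible---translation invariance forces $(1,\dots,1)$ into its kernel---so ``non-degenerate'' means the kernel is exactly one-dimensional. The paper handles this by first solving a projected reduced system for $\beta$ and then using the free translation parameter $\alpha$ to kill the remaining scalar equation; a direct implicit-function-theorem appeal to invertibility of $dF$ would fail.
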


More precise description of the solutions can be found in the course of the proof. From the proof in Section \ref{sec5}, we can see that in the case of two positively oriented vortex rings and one negatively oriented vortex ring, there exists solutions to (\ref{Balancecondition}) and the corresponding linearized operator of (\ref{mapF}) is non-degenerate. Hence one can construct traveling wave solutions with three vortex rings. We also show in Section \ref{sec6} and Section 6 that  (\ref{Balancecondition}) has solutions satisfying $\mathcal{M}$, provided that $m=n+1$. (Surprisingly, if $m-n>1$, we have not found any solutions satisfying $\mathcal{M}$.)  When $m=n+1$ the location of the vortex points are determined by the roots of  generating polynomials which  have recurrence
relations  and can be explicitly written down
using certain Wronskians. These generating polynomials are natural generalizations of the classical Adler-Moser polynomials. We refer to Section 6 for more details.

Let us point out that traveling wave solutions of the Schrodinger map equation with single vortex ring has been constructed in \cite{LFH}. In principle, our method in this paper can also be applied to this equation and other related equation such as the Euler equation.

The dimension three case (with obvious extension to higher dimensions) studied in the present paper actually has some new properties compared to the 2D case. Roughly speaking, the main difference of the 2D and 3D case is the following. In 2D, the vortex location of our solutions is determined by the Adler-Moser polynomials. These polynomials can be obtained by method of integrable systems and are well studied. However, in 3D, due to the presence of additional terms in the equation, the vortex location is not determined by Adler-Moser polynomials. Indeed they are determined by a sequence of polynomials, which can be regarded as a generalization of Adler-Moser polynomials, and up to our knowledge, are new. We have to find these new generating polynomials using some techniques from the theory of integrable systems. This step is nontrivial and may have independent interest.

If we rescale the Gross-Pitaevskii equation $x=\epsilon^{-1} \bar{x}$, then  the distance between the locations of the vortex rings obtained in Theorem \ref{thm1} is of the order $ {\mathcal O} (\frac{1}{ |\log \epsilon|})$. Note that this distance is much smaller than the {\em leapfrogging} region in which the distance between the vortex rings is of the order $ {\mathcal O} (\frac{1}{ \sqrt{ |\log \epsilon|}})$. For the dynamics of vortex rings in the leapfrogging region for  the Gross-Pitaevskii equation we refer  to Jerrard-Smets \cite{Jerrard2016LeapfroggingVR} and the references therein.

The paper is organized as follows. In Section \ref{sec2}, we formulate the 3D problem as a two dimensional one. In Section \ref{sec3}, we introduce the approximate multi-vortex ring solutions and estimate their error. Section \ref{sec4} is devoted to the study of a nonlinear projected problem. This is more or less standard. The main part of the paper is Section \ref{sec5} and Section 6, where we get the reduced problem for the position of the vortex points and study some generating polynomials whose roots determine the location of the vortex rings.

\textbf{Acknowledgement }W. Ao is supported by NSFC no. 11631011, no. 11801421, and no. 12071357. Y. Liu is partially supported by NSFC no. 11971026 and \textquotedblleft
The Fundamental Research Funds for the Central Universities
WK3470000014\textquotedblright.  J. Wei is partially supported by NSERC of Canada.

\section{Formulation of the problem}\label{sec2}

We are looking for  a solution to problem (\ref{GrossPitaevskii}) in the form
$$
\Psi(y, t)
\,=\,
{\tilde u}\Big(y_1,y_2,y_3-c t\Big).
$$
Then $\tilde{u}$ must satisfy
\begin{equation}\label{GP-1}
-ic\frac{\partial \tilde{u}}{\partial y_3}=\Delta \tilde{u}+(1-|\tilde{u}|^2)\tilde{u}.
\end{equation}
Let $\varepsilon>0$ be a small parameter. We would like to seek solutions with traveling speed $c=\varepsilon|\ln \varepsilon|$.  Equation (\ref{TWK0}) then becomes
\begin{equation}
-i\varepsilon|\ln\varepsilon|\frac{\partial \tilde{u}}{\partial y_3}=\Delta \tilde{u}+(1-|\tilde{u}|^2)\tilde{u}\label{oure}.
\end{equation}
We require the solution $\tilde{u}$ satisfies
\begin{equation*}
\tilde{u}(y)\to 1\mbox{ as }|y|\to \infty.
\end{equation*}We are interested in the solutions axially symmetric with respect to the $y_3$ axis.
Let us introduce $x_1=\sqrt{y^2_1+y^2_2}, x_2=y_3,$ and
\begin{equation*}
z=x_1+ix_2,\, u(x_1,x_2)=\tilde{u}(y_1,y_2,y_3).
\end{equation*}
Then we get the following equation satisfied by $u$:
\begin{equation}\label{e1}
-i\varepsilon |\ln\varepsilon|\frac{\partial u}{\partial x_2}=\Delta_{(x_1,x_2)} u+\frac{1}{x_1}\frac{\partial u}{\partial x_1}+(1-|u|^2)u,
\end{equation}
with boundary conditions
\begin{equation*}
\frac{\p }{\p x_1}u(0,x_2)=0, \, u\to 1 \mbox{ as }|z|\to \infty.	
\end{equation*}

Observe that the problem (\ref{e1}) is invariant under the following two transformations:
\begin{equation*}
u(z)\to \overline{u(\bar{z})}, \, u(z)\to u(-\bar{z}).	
\end{equation*}
Thus we impose the following symmetry on the solutions $u$:
\begin{equation*}
\Sigma=\{u(z)= \overline{u(\bar{z})}, \, u(z)=u(-\bar{z}).	
\}	
\end{equation*}
This symmetry will play an important role in our analysis.
As a conclusion, if we write
$$
u(x_1,x_2)=u_1(x_1,x_2)+iu_2(x_1,x_2),
$$
then $u_1$ and $u_2$ enjoy the following conditions:
\begin{align}
\begin{aligned}\label{symmetryandboundary}
u_1(x_1,x_2)=u_1(-x_1,x_2),
&\qquad
u_1(x_1,x_2)=u_1(x_1,-x_2),
\\
u_2(x_1,x_2)=u_2(-x_1,x_2),
&\qquad
u_2(x_1,x_2)=-u_2(x_1,-x_2),
\\
\frac{\partial u_1}{\partial x_1}(0,x_2)=0,
&\qquad
\frac{\partial u_2}{\partial x_1}(0,x_2)=0.
\end{aligned}
\end{align}

We now have a two dimensional elliptic system with Neumann boundary condition $\frac{\p u}{\p x_1}(0,x_2)=0$. Compared with the two dimensional problem studied in \cite{Liu-Wei}, there are two differences: Firstly, there is an extra term $\frac{1}{x_1}\frac{\p u}{\p x_1}$; Secondly, the coefficient in front of $\frac{\p u}{\p x_2}$ becomes $\varepsilon |\ln \varepsilon|$, instead of $\varepsilon$.

Some remarks are in order. We aim to construct multi-vortex ring solutions to (\ref{oure}). For single vortex ring, one can use $v^+(x-p)v^-(x+p)$ as a good approximate solution for the equation (\ref{e1}). But for multi-vortex rings, the vortex-anti vortex pairs are not good enough because of the extra term $\frac{1}{x_1}\frac{\p u}{\p x_1}$ and the Neumann boundary condition. So we need to use more accurate approximate solution which we will explain in the next section.

\section{The approximate solution}\label{sec3}
In this section, we would like to define a family of approximate solutions for the equation (\ref{e1}).

\subsection{The first approximate solution}\label{subsec1}

We consider $\m K$ distinct points $p_j=(p_{j,1},p_{j,2})$, $j=1,...,\m K$, lying in the right half of the $z$ plane. Let us define $p_j^*=-\bar{p}_j$ for $j=1,\cdots,\m K$. We also denote $p_{\m K+j}=p_j^*$ for $j=1,\cdots,\m K$. Intuitively, these points represent the location of the vortex rings.
We also suppose that the set of points $\{p_1,...,p_{\m K}\}$ is symmetric with respect to the $x_1$ axis.  Moreover, we will assume:

\medskip

({\bf A1.})
\begin{eqnarray}\label{configuration}
\rho&:=min_{\ell \neq j, 1\leq \ell,j\leq \m K}\{|p_{\ell}-p_j|\}\sim \frac{1}{\varepsilon|\ln \varepsilon| },
\end{eqnarray}
and
$$ |p_{j,1}|\sim \frac{1}{\varepsilon }$$ for $j=1, \cdots, \m K.$

\medskip

n order to understand more clearly the difference between the 2D and 3D case, let us now following the strategy of \cite{Liu-Wei} to define an approximate solution.

Let $S=S_1$ be the function associated to the degree one vortex solution of the Ginzburg-Landau equation, defined in the first section. Define
\begin{equation*}
u_j:=S(|z-p_j|)e^{i\tau_j\theta_j}, \, \ j=1,\cdots,\m K,
\end{equation*}
where $\theta_j$ is the angle around $p_j$, $\tau_j=+1\mbox{ or} -1$, corresponding to  the degree $\pm 1$ vortex. We then set
\begin{equation*}
u_j=S(|z-p_j|)e^{-i\tau_{j-\m K}\theta_j}, \, \ j=\m K+1,\cdots,2\m K
\end{equation*}
where $\theta_j$ is the angle around $p_j$. The reason of defining these functions is the following: Projecting a vortex ring onto the $z$ plane, we get two circles in the right and left plane with different orientation. Here $u_j$ and $u_{j+\m K}$ can be viewed as a vortex-antivortex pair.

We now define the first approximate solution as
\begin{equation}\label{firstapproximation}
U=\Pi_{j=1}^{2\m K}u_j.
\end{equation}
We will see that this approximate solution is not good enough to handle the 3D case and later on we will introduce a refined approximate solution.  Note that at this moment, we still haven't decided the sign of the degree of the vortex. This will also be done later on.

Since each vortex in the right half plane has a vortex in the left plane with opposite sign, we can check directly that $U\to 1$ as $|z|\to \infty$.  We will see that the approximate solution satisfies the boundary and symmetry condition (\ref{symmetryandboundary}). In fact, by the choice of the vortex points, one has
\begin{lemma}\label{symmetry}
	The approximate solution $U$ has the following symmetry:
	\begin{equation*}
	U(\bar{z})=\bar{U}(z), \, \, U(z^*)=U(z)	
	\end{equation*}	
	where $z^*=-\bar{z}$.
\end{lemma}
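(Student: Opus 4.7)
The plan is to verify the two claimed symmetries by direct substitution into the product formula \eqref{firstapproximation}, using the prescribed symmetry of the vortex configuration together with the compatibility between the degrees $\tau_j$ of conjugate vortices (implicit in the construction, since the statement must be nontrivially true: for each $j\in\{1,\dots,\m K\}$ there is an index $\sigma(j)\in\{1,\dots,\m K\}$ with $p_{\sigma(j)}=\bar p_j$ and $\tau_{\sigma(j)}=\tau_j$). The key reductions I want to use are the elementary identities $|\bar z-p_j|=|z-\bar p_j|$, $|{-\bar z}-p_j|=|z-(-\bar p_j)|$, together with the angle identities $\arg(\bar z-p_j)=-\arg(z-\bar p_j)$ and $\arg(-\bar z-p_j)=\pi-\arg(z-(-\bar p_j))\pmod{2\pi}$.

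For the first identity $U(\bar z)=\overline{U(z)}$, I would compute factor-by-factor. For $1\le j\le\m K$, the identities above give
\[
u_j(\bar z)=S\bigl(|z-\bar p_j|\bigr)\,e^{-i\tau_j\arg(z-\bar p_j)}=S\bigl(|z-p_{\sigma(j)}|\bigr)\,e^{-i\tau_{\sigma(j)}\theta_{\sigma(j)}(z)}=\overline{u_{\sigma(j)}(z)},
\]
where we used $\tau_{\sigma(j)}=\tau_j$. An analogous relabeling works for the indices $\m K+1\le j\le 2\m K$ (here conjugation maps the left-plane points to themselves because $\overline{-\bar p_j}=-p_j=-\overline{p_{\sigma(j)}}$ is still of the form $p_{\m K+\sigma(j)}$, and the sign convention $e^{-i\tau_{j-\m K}\theta_j}$ precisely matches the conjugation of the corresponding factor). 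Taking the product over $j$ and using that $\sigma$ permutes $\{1,\dots,\m K\}$ (and its counterpart permutes $\{\m K+1,\dots,2\m K\}$) yields $U(\bar z)=\overline{U(z)}$.

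For the second identity $U(z^*)=U(z)$ with $z^*=-\bar z$, I would use the reindexing $p_{\m K+j}=-\bar p_j=p_j^*$. For $1\le j\le\m K$,
\[
|z^*-p_j|=|{-\bar z}-p_j|=|z+\bar p_j|=|z-p_{\m K+j}|,\qquad \arg(z^*-p_j)=\pi-\theta_{\m K+j}(z)\pmod{2\pi}.
\]
Hence
\[
u_j(z^*)=S\bigl(|z-p_{\m K+j}|\bigr)\,e^{i\tau_j\pi}\,e^{-i\tau_j\theta_{\m K+j}(z)}=-u_{\m K+j}(z),
\]
since $\tau_j=\pm 1$ so $e^{i\tau_j\pi}=-1$. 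A symmetric computation gives $u_{\m K+j}(z^*)=-u_j(z)$. Multiplying over all $2\m K$ indices, the sign contribution is $(-1)^{2\m K}=1$, and the two sets of factors are permuted into each other, which gives $U(z^*)=U(z)$.

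The only subtle point is bookkeeping of the angles modulo $2\pi$ at conjugation and the point reflection $z\mapsto -\bar z$: because the factors $e^{i\tau\theta}$ are multi-valued, one has to choose a coherent branch on the complement of the set of vortices. However, since the ultimate quantity of interest is the product $U$ and each factor $e^{i\tau_j\pi}$ can be absorbed into the sign $-1$ which is single-valued, the argument is independent of this choice. Apart from this mild obstacle there is nothing deeper to do: both identities reduce to verifying the pointwise relation for each factor, which is a direct calculation from the definition of $u_j$ and the assumed symmetry of the configuration.
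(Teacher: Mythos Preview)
Your proposal is correct and follows essentially the same route as the paper: a direct factor-by-factor check using the symmetry of the vortex configuration, pairing each $u_j$ with $u_{\m K+j}$ for the $z\mapsto -\bar z$ symmetry and with $u_{\sigma(j)}$ for the $z\mapsto\bar z$ symmetry. Your version is actually a bit more careful than the paper's, which suppresses the phase $e^{i\tau_j\pi}=-1$ arising from $\arg(w^*)=\pi-\arg(w)$; you correctly observe that these signs appear in pairs and cancel as $(-1)^{2\m K}=1$, and you make explicit the needed compatibility $\tau_{\sigma(j)}=\tau_j$ (which in the paper is encoded in the assumption that the positive and negative vortex sets are each separately reflection-symmetric, cf.\ condition~$(\mathcal{M})$).
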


\begin{proof}
	This is the result by the definition of the vortex points. Since $v_-(z)=\overline{v_+(z)}$, one has
	\begin{equation*}
	\begin{split}
	U(z^*)&=\Pi_{j=1}^{\m K} [u_j(z^*-p_j)u_{\m K+j}(z^*-p_j^*)]	\\
	&=\Pi_{j=1}^{\m K} [ u_j((z-p_j^*)^*)u_{\m K+j}((z-p_j)^*)]\\
	&=\Pi_{j=1}^{\m K} [u_{\m K+j}(z-p_j^*)u_j(z-p_j)]=U(z),
	\end{split}
	\end{equation*}
	and since the points $\{p_j\}$ are invariant with respect to the reflection across the $x_1$ axis, we have
	\begin{equation*}
	\begin{split}
	U(\bar{z})&=\Pi_{j=1}^{\m K} [ u_j(\bar{z}-p_j)u_{\m K+j}(\bar{z}-p_j^*)]	\\
	&=\Pi_{j=1}^{\m K}[ \bar{u}_j(z-\bar{p}_j)\bar{u}_{\m K+j}(z-\bar{p}_j^*)]=\bar{U}(z).
	\end{split}
	\end{equation*}
	This finishes the proof.

\end{proof}

\subsection{The error of the first approximate solution}\label{subsec2}
Firstly, we estimate the error of the first approximate solution $U$. Since it satisfies the symmetry and boundary condition (\ref{symmetryandboundary}), one only need to consider in the domain $\{x_1>0\}$.

Recall that the degree $\pm 1$ vortex satisfies
\begin{equation*}
S''(r)+\frac{1}{r}S'(r)-\frac{1}{r^2}S(r)+(1-S^2)S=0.
\end{equation*}
It has the following properties(\cite{Pacard}):
\begin{lemma}\label{propertyofvortex}

 The vortex solution satisfies the following properties:
\begin{itemize}	
\item[(i).]$S(0)=0, \, S'(r)>0, \, S(r)\in (0, 1) $;
\item[(ii.)]
$S(r)=1-\frac{1}{2r^2}+O(\frac{1}{r^4})$ as $r\to \infty$;	
\item[(iii).]$S(r)= a_0r-\frac{a_0}{8}r^3+O(r^5)$ as $r\to 0$ where $a_0$ is a positive constant.
\end{itemize}
\end{lemma}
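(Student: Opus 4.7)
The plan is to analyse the radial ODE
\begin{equation*}
S'' + \frac{1}{r}S' - \frac{1}{r^2}S + (1-S^2)S = 0, \qquad S(0)=0,\ S(\infty)=1,
\end{equation*}
which is an autonomous, non-autonomous hybrid well studied in the Ginzburg--Landau literature.

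For part (i), existence of a positive, monotone solution with the prescribed boundary values is classical, proved by a shooting argument in the initial slope $a_0 = S'(0)>0$ in the references \cite{Fife,Her}. I would additionally remark that $S < 1$ follows from the maximum principle: if $S$ attained a value larger than $1$ at some $r_0$, then at the first such maximum one would have $S''(r_0) \le 0$ while the ODE forces $S''(r_0) = S(r_0)/r_0^2 - S(r_0)(1-S^2(r_0)) > 0$, a contradiction. Strict positivity of $S'$ follows analogously by inspecting the ODE at a putative critical point.

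For part (ii), I would set $\phi := 1 - S$, so that $\phi \to 0$ at infinity and the ODE becomes
\begin{equation*}
-\phi'' - \frac{1}{r}\phi' + 2\phi - 3\phi^2 + \phi^3 = \frac{1-\phi}{r^2}.
\end{equation*}
Plugging the ansatz $\phi = A/r^2$ into the linear operator $L\phi := -\phi'' - r^{-1}\phi' + 2\phi$ gives $L(A/r^2) = 2A/r^2 - 4A/r^4$, so matching the leading forcing $1/r^2$ fixes $A = 1/2$. A barrier argument on $[R,\infty)$ with sub- and super-solutions of the form $(1\pm \delta)/(2r^2)$, combined with the exponential decay of the Green's function of $L$ (which is built from modified Bessel functions $I_0(\sqrt{2}r)$, $K_0(\sqrt{2}r)$), upgrades the formal matching to the quantitative claim $\phi(r) = 1/(2r^2) + O(r^{-4})$. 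This is the genuinely analytic step and where the main work sits.

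For part (iii), $r=0$ is a regular singular point of the ODE with indicial exponents $\pm 1$. The boundary condition $S(0)=0$ selects the regular exponent $+1$ and, combined with the symmetry of the equation in $S$, yields an odd-power Frobenius expansion $S = a_0 r + a_2 r^3 + a_4 r^5 + \cdots$. Substituting term by term, the $a_0/r$ contributions from $S'/r$ and $-S/r^2$ cancel, and matching the coefficients of $r$ and $r^3$ gives
\begin{equation*}
8 a_2 + a_0 = 0, \qquad 24 a_4 + a_2 - a_0^3 = 0,
\end{equation*}
hence $a_2 = -a_0/8$, exactly as claimed. Higher coefficients are determined recursively in terms of $a_0$, and convergence of the series on a neighbourhood of the origin follows from the analytic theory of ODEs at regular singular points. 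The positivity of $a_0$, needed in (iii), is the same $a_0 = S'(0)$ singled out by the shooting argument in (i); the three items are thus consistent and complete.
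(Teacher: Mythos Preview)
Your proposal is correct and in fact goes well beyond what the paper does: the paper does not prove this lemma at all but simply records it with a reference to \cite{Pacard} (Pacard--Rivi\`ere). Your sketch --- maximum-principle arguments for (i), the substitution $\phi = 1 - S$ with leading-order matching and a barrier/Green's-function argument for (ii), and a Frobenius expansion at the regular singular point for (iii) --- is exactly the standard route taken in that reference and in \cite{Fife,Her}, and your explicit computations (in particular $8a_2 + a_0 = 0$ giving $a_2 = -a_0/8$) are accurate.

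One minor comment: the claim that $S'>0$ ``follows analogously'' from the maximum principle is a bit glib, since at an interior critical point of $S$ the sign of $S'' = S\bigl(r^{-2} - 1 + S^2\bigr)$ is not immediately determined; the clean argument uses either the monotonicity built into the shooting construction or the observation that $w = S'$ satisfies a linear equation to which a Sturm-type comparison applies. This is handled in the cited references, so it does not affect the validity of your outline.
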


In this subsection, we are going to estimate the error caused by the first approximation $U$. Use $E_1$ to denote the error :
\begin{eqnarray*}
E_1=i\ve|\ln \ve|\frac{\partial U}{\partial x_2}+\Delta U+(1-|U|^2)U+\frac{1}{x_1}\frac{\partial U}{\partial x_1}.
\end{eqnarray*}

We have
\begin{equation*}
\begin{split}
\Delta U&=\Delta(u_1\cdots u_{2\m K})\\
&=\sum_{\ell=1}^{\m K}(\Delta u_\ell\Pi_{j\neq \ell}u_j)+\sum_{j\neq \ell}\nabla u_{\ell}\cdot \nabla u_j\Pi_{t\neq \ell,j}u_{t}.	
\end{split}	
\end{equation*}
On the other hand, writing $\rho_j=|u_j|^2-1$, one has
\begin{equation*}
\begin{split}
|U|^2-1	&=\Pi_{j=1}^{2\m K}(1+\rho_j)-1\\
&=\sum_j \rho_j+\sum_{j=1}^{2\m K}\mathcal{Q}_j,
\end{split}
\end{equation*}
where $\mathcal{Q}_j=\sum_{i_1<i_2<\cdots<i_i}(\rho_{i_1}\cdots\rho_{i_i})$. Using the fact that $\Delta u_j-\rho_j u_j=0$, we get
\begin{equation*}
\begin{split}
\Delta U+(1-|U|^2)U&=\sum_{\ell \neq j}(\nabla u_{\ell}\cdot \nabla u_j\Pi_{t\neq \ell,j}u_{t})-U\sum_{j=2}^{2\m K}\mathcal{Q}_j.	
\end{split}
\end{equation*}
Let
$$\varphi_0=\sum_{j=1}^{\m K}{\tau_j}(\theta_j-\theta_{\m K +j}),$$
and
$$ \, r_j=|z-p_j|, \, r_{\m K+j}=|z-q_j|, r_{j,1}=x_1-p_{j,1}, r_{j,2}=x_2-p_{j,2}.$$ We have
\begin{equation*}
\begin{split}
\frac{1}{x_1}\frac{\partial U}{\partial x_1}&=\frac{1}{x_1}\Big[e^{i\varphi_0}\frac{\partial }{\partial x_1}(\Pi_{j=1}^{\m K} S(r_j)S(r_{\m K+j}))+ie^{i\varphi_0}\frac{\partial \varphi_0}{\partial x_1}\Pi_{j=1}^{\m K} S(r_j)S(r_{\m K+j})\Big]	\\
&=\Big(\sum_{j=1}^{2\m K}\frac{1}{x_1}\frac{S'(r_j)}{S(r_j)}\frac{r_{j,1}}{r_j}+\frac{i}{x_1}\frac{\partial \varphi_0}{\partial x_1}\Big)U.
\end{split}	
\end{equation*}
Similarly, there holds
\begin{equation*}
\begin{split}
\frac{\partial U}{\partial x_2}&=\sum_{j=1}^{2\m K}\partial_{x_2}u_j\Pi_{\ell\neq j}u_{\ell}\\
&=\Big(\sum_{j=1}^{2\m K}\frac{S'(r_j)}{S(r_j)}\frac{r_{j,2}}{r_j}+i\frac{\partial \varphi_0}{\partial x_2}	\Big)U.
\end{split}	
\end{equation*}	
Combining the above computations, we obtain
\begin{equation*}
\begin{split}
E_1&=\sum_{\ell\neq j}\frac{(\nabla u_{\ell}\cdot \nabla u_j)}{u_{\ell} \, u_j}U-U\sum_{j=2}^{2\m K}\mathcal{Q}_j\\
&	+
\Big(\sum_{j=1}^{2\m K}\frac{1}{x_1}\frac{S'(r_j)}{S(r_j)}\frac{r_{j,1}}{r_j}+\frac{i}{x_1}\frac{\partial \varphi_0}{\partial x_1}\Big)U\\
&+i\ve|\ln\ve|\Big(\sum_{j=1}^{2\m K}\frac{S'(r_j)}{S(r_j)}\frac{r_{j,2}}{r_j}+i\frac{\partial \varphi_0}{\partial x_2}	\Big)U.
\end{split}	
\end{equation*}
In the sequel, we denote $|p_{j,1}|$ by $d_j$. Direct computation yields

\begin{equation*}
\begin{split}
\frac{\partial \varphi_0}{\partial x_2}&=\sum_{j=1}^{\m K}\tau_j\big[ \frac{\partial \theta_j}{\partial x_2}-\frac{\partial \theta_{\m K+j}}{\partial x_2} \big]\\
&=\sum_{j=1}^{\m K}\tau_j\Big[\frac{x_1-p_{j,1}}{r^2_j}-\frac{x_1-p^*_{j,1}}{r^2_{\m K+j}}\Big]\\
&=\sum_{j=1}^{\m K}\tau_j\frac{2d_j(x_1^2-p_{j,1}^2-(x_2-p_{j,2})^2)}{r_j^2r_{\m K+j}^2}.	
\end{split}
\end{equation*}
We also have
\begin{equation*}
\begin{split}
\frac{1}{x_1}\frac{\partial \varphi_0}{\partial x_1}&=\frac{1}{x_1}\sum_{j=1}^{\m K}\tau_j\Big[\frac{\partial \theta_j}{\partial x_1}-\frac{\partial \theta_{\m K+j}}{\partial x_1}\Big]\\
&=-\frac{1}{x_1}\sum_{j=1}^{\m K}\tau_j\Big[\frac{x_2-p_{j,2}}{r^2_j}-\frac{x_2-p^*_{j,2}}{r^2_{\m K+j}}\Big].
\end{split}	
\end{equation*}
Observe that $\frac{1}{x_1}\frac{\partial \varphi_0}{\partial x_1}$ contributes to the imaginary part of the error $E_1$. Note that away from the vortex point $p_{j}$, this decays only at the rate $O(r_j^2)$, which is not sufficient for our construction. Hence the vortex-antivortex pair is not enough to be a good approximate solution.

\subsection{The reference vortex ring}\label{subsec3}

In order the get rid of these singularities, one need more accurate approximations for the vortex ring.

In \cite{Jerrard2016LeapfroggingVR}, leap frogging behavior of the vortex rings to the GP equation has been analyzed. Indeed, our construction in this paper is partly inspired by these leap frogging behavior. Following the analysis performed in \cite{Jerrard2016LeapfroggingVR}, we introduce the potential function $A_a$, which satisfies the following equation:

\begin{equation}\label{A}
\left\{
\begin{array}{l}
-div\Big(\frac{1}{x_1}\nabla (x_1A_a) \Big)=2\pi \delta_a \mbox{ in }H,\\
A_a=0 \mbox{ on }\p H	,
\end{array}
\right.	
\end{equation}
where $H=\{(x_1, x_2)\in \R^2, \, x_1>0\}$ and $a\in H$.

For the region $\{x_1\leq 0\}$, we consider the odd extension of $A_a$. The expression of $A_a$ can be integrated explicitly in terms of complete elliptic integrals (see \cite{Jackson,Jerrard2016LeapfroggingVR}). We emphasize that in the literature, there are different notations concerning the definition of complete elliptic integrals, mainly about its arguments.

Let $r:=r_a=|z-a|$. When $r_a=o(|a_1|)$, one has the following asymptotic behavior
\begin{equation}\label{asymp1}
A_a(z)=\Big( \ln \frac{a_1}{r_a}+3\ln 2-2 \Big)+O\Big( \frac{r_a}{a_1}|\ln \frac{r_a}{a_1}| \Big)
\end{equation}
and
\begin{equation}\label{asymp2}
\p_r A_a=-\frac{1}{r}+O(\frac{1}{a_1}), 	
\end{equation}
and for $x_1\to 0$
\begin{equation}\label{asymp3}
A_a(x_1,x_2)=\frac{x_1a_1^2}{a_1^3+x_2^2}\mbox{ as }\frac{x_1}{a_1}\to 0.
\end{equation}

Up to a constant phase factor, there exists a unique unimodular map $u_a^*\in C^\infty(H\setminus\{a\}, S^1)$ such that
\begin{equation}\label{relation}
x_1(iu_a^*, \nabla u_a^*)=x_1j(u_a^*)=-\nabla^\perp(x_1A_a),	
\end{equation}
where
$$
j(u)=u\times \nabla u=(iu, \nabla u)=Re(iu \nabla \bar{u}).
$$
In the sense of distribution, we have
\begin{equation*}
\left\{
\begin{array}{l}
div(x_1j(u_a^*))=0, \\
 curl(j(u_a^*))=2\pi \delta_a,	
 \end{array}
 \right.
\end{equation*}
and the function $u_a^*$ corresponds to a singular vortex ring centered at $a$.

If we denote by $u_a^*=e^{i\varphi_a}$, then by (\ref{relation}), one has
\begin{equation}\label{relation1}
\p_1\varphi_a=\p_2 A_a, \, \p_2\varphi_a=-\frac{1}{x_1}\frac{\p (x_1A_a)}{\p x_1}	.
\end{equation}
So from the definition of $\varphi_a$ and the boundary condition of $A_a$, one has

\begin{equation*}
\left\{
\begin{array}{l}
\Delta \varphi_a+\frac{1}{x_1}\frac{\p \varphi_a}{\p x_1}=0 \mbox{ in } H, \\
\p_1 \varphi_a(0, x_2)=0 \mbox{ on }\p H	.
\end{array}
\right.	
\end{equation*}
Moreover, using the relation of $\varphi_a$ and $A_a$ in (\ref{relation1}), one has
\begin{equation}\label{asymp4}
	\nabla \varphi_a=\frac{1}{r_a}\nabla^\perp r_a+O(\frac{1}{x_1}\log \frac{a_1}{r_a}) \mbox{ for }\frac{r_a}{a_1}\to 0
\end{equation}
and
\begin{equation}\label{asymp5}
	|\nabla \varphi_a|\leq \frac{C}{1+r_a}\mbox{ for }r_a\geq 1.
\end{equation}
 So near the vortex point $a$, $\nabla \varphi_a$ can be viewed as a perturbation of $\nabla \theta_a$.

\subsection{Improvement of the first approximate solution}\label{subsec4}
We will use $u_a^*$ instead of the vortex-anti vortex pair $e^{i(\theta_j-\theta_{\m K +j})}$ to define a more accurate approximate vortex ring. In view of the symmetry condition (\ref{symmetryandboundary}), the vortex ring associated to a point $a\in H$ will defined to be
\begin{equation*}
S(|z-a|)S(|z-a^*|)u_a^*(x)=S(|z-a|)S(|z-a^*|)e^{i\varphi_a(z)}.
\end{equation*}
We can also decompose $\varphi_a$ as
\begin{equation*}
\varphi_a(z)=\theta_a(z)-\theta_{a^*}(z)+\tilde{\varphi}_a.	
\end{equation*}
Note that the difference $\tilde{\varphi}_a$ can be analyzed around the vortex point $a$ using the asymptotic behavior of $A$.

Define
\begin{equation*}
\varphi_d=\sum_{j=1}^{\m K}\tau_j\varphi_{p_j}=\varphi_0+\sum_{j=1}^{\m K}\tau_j\tilde{\varphi}_{p_j}:=\varphi_0+\tilde{\varphi}_{\bf p}	.
\end{equation*}
Then our final approximation will be defined as
\begin{equation*}
	\m U(x)=U(x)e^{i\tilde{\varphi}_{\bf p}}=\Pi_{j=1}^{2\m K}S_{p_j}(x)e^{i\varphi_d}.
\end{equation*}
Namely, we replace the function $\varphi_0$ by $\varphi_{d}$ in the first approximate solution.
Since $\{p_j\}$ satisfies ({\bf A1}),  one can see that the new approximate solution will satisfy the symmetry condition (\ref{symmetryandboundary}).

\medskip

\subsection{Error of the final approximation}\label{subsec5}
Now the new error becomes
\begin{equation*}
\begin{split}
E_2&=i\ve|\ln \ve|\frac{\partial \mathcal{U}}{\partial x_2}+\Delta \mathcal{U}+(1-|\mathcal{U}|^2)\mathcal{U}+\frac{1}{x_1}\frac{\partial \mathcal{U}}{\partial x_1}\\
&:=E_{21}+E_{22}	.
\end{split}
\end{equation*}
Here $E_{21}$ is the first term in the left hand side. We use $B_l(p)$ to denote the ball of radius $l$ centered at the point $p$.
We have the following error estimate:
\begin{lemma}\label{lemmaforerror}
There exists a constant $C$ such that for all small $\ve $ and all points $p_j$ satisfying ({\bf A1}), we have
\begin{equation*}
\sum_{j=1}^{2\m K}\|E_2\|_{L^{9}(B_3(p_j))}\leq C\ve|\ln \ve|.
\end{equation*}	
Moreover, we have $E_2=i\mathcal{U}[R_1+iR_2]$, with $R_1, \, R_2$ real valued and
\begin{equation*}
\begin{split}
|R_1|
&\leq C\sum_{j=1}^{2\m K}\frac{O(\ve^{1-\delta})}{(1+r_j)^3},\\
|R_2|&\leq C\sum_{j=1}^{2\m{ K}}\frac{O(\ve^{1-\delta})}{1+r_j},
\end{split}	
\end{equation*}
for any $\delta \in(0,1)$, if $|z-p_j|>1$ for all $j$.

\end{lemma}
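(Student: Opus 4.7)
The plan is to exploit two structural features of the improved phase $\varphi_d$: (a) each $\varphi_{p_j}$ satisfies $\Delta\varphi_{p_j}+\frac{1}{x_1}\partial_{x_1}\varphi_{p_j}=0$ away from $p_j$ by the construction in Section \ref{subsec3}, and (b) near $p_j$, $\varphi_{p_j}=\theta_{p_j}-\theta_{p_j^*}+\tilde\varphi_{p_j}$ with $\tilde\varphi_{p_j}$ smooth. Writing $\mathcal{U}=\rho e^{i\varphi_d}$ with $\rho=\prod_{j=1}^{2\mathcal{K}}S(r_j)$, a direct computation of $E_2/\mathcal{U}=-R_2+iR_1$ yields
\begin{align*}
R_1 &= 2\,\nabla\log\rho\cdot\nabla\varphi_d + \Delta\varphi_d + \frac{1}{x_1}\partial_{x_1}\varphi_d + \varepsilon|\ln\varepsilon|\,\partial_{x_2}\log\rho,\\
R_2 &= |\nabla\varphi_d|^2 - \frac{\Delta\rho}{\rho} - \frac{1}{x_1}\frac{\partial_{x_1}\rho}{\rho} - (1-\rho^2) + \varepsilon|\ln\varepsilon|\,\partial_{x_2}\varphi_d.
\end{align*}
Feature (a) immediately removes $\Delta\varphi_d+\frac{1}{x_1}\partial_{x_1}\varphi_d$ from $R_1$, cancelling the non-decaying imaginary term $\frac{1}{x_1}\partial_{x_1}\varphi_0$ identified at the end of Section \ref{subsec2}, which is precisely what the refinement was designed for.

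For the sharp bound $|R_1|=O(\varepsilon^{1-\delta}(1+r_k)^{-3})$ at a point in the far field $\{r_l>1\text{ for all }l\}$ near the closest vortex $p_k$, I would use an angular--radial decomposition. The far-field asymptotic $S(r)=1-\frac{1}{2r^2}+O(r^{-4})$ gives $S'(r)/S(r)=O(r^{-3})$, so $\nabla\log\rho=\sum_j(S'(r_j)/S(r_j))\hat r_j$ is dominated by its radial component $O(r_k^{-3})\hat r_k$. By (b) and \eqref{asymp4}, the leading part of $\nabla\varphi_{p_k}$ is $\nabla\theta_{p_k}\sim \hat\theta_{p_k}/r_k$, which is orthogonal to $\hat r_k$; hence the naive rate $r_k^{-3}\cdot r_k^{-1}$ vanishes. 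The residual projection $\hat r_k\cdot\nabla\varphi_{p_k}$ is controlled by \eqref{asymp4} at size $O(\varepsilon|\ln\varepsilon|)$, distant contributions $\nabla\varphi_{p_l}$ $(l\neq k)$ are $O(\varepsilon|\ln\varepsilon|)$ by \eqref{asymp5}, and the remaining $\varepsilon|\ln\varepsilon|\,\partial_{x_2}\log\rho$ term is $O(\varepsilon|\ln\varepsilon|/r_k^3)$. Summing gives $|R_1|\leq C\varepsilon|\ln\varepsilon|/r_k^3\leq C\varepsilon^{1-\delta}/(1+r_k)^3$.

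For $R_2$ I would use the vortex equation $\Delta S_j/S_j+(1-S_j^2)=r_j^{-2}$ together with the product expansion $1-\rho^2=\sum_j(1-S_j^2)+\text{(cross)}$ to extract $\sum_j r_j^{-2}$ from the real part, and cancel it against the self-squares $|\nabla\varphi_{p_l}|^2\approx r_l^{-2}+r_{l^*}^{-2}$ inside $|\nabla\varphi_d|^2=\sum_l|\nabla\varphi_{p_l}|^2+2\sum_{l<m}\tau_l\tau_m\nabla\varphi_{p_l}\cdot\nabla\varphi_{p_m}$. The surviving cross-interactions $\nabla\varphi_{p_k}\cdot\nabla\varphi_{p_l}$ with $l\neq k$ carry a $1/r_k$ singularity paired with a smooth $O(\varepsilon|\ln\varepsilon|)$ factor, yielding $O(\varepsilon|\ln\varepsilon|/r_k)$ and forcing the weaker rate $|R_2|=O(\varepsilon^{1-\delta}(1+r_k)^{-1})$. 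For the $L^9$-norm on $B_3(p_j)$, I would factor $\mathcal{U}=u_j u_{\mathcal{K}+j}\cdot W_j$, where $W_j$ is smooth and bounded on $B_3(p_j)$ (collecting the remaining $S_l$'s together with $e^{i\tilde\varphi_{\mathbf p}}$): the Ginzburg--Landau equations satisfied by the $u_l$'s, combined with the explicit form of $u_j u_{\mathcal{K}+j}$, kill the singular contributions to $E_2$, leaving only terms that are pointwise $O(\varepsilon|\ln\varepsilon|)$ on $B_3(p_j)$ by \textbf{(A1)} and \eqref{asymp1}--\eqref{asymp5}; the $L^9$ bound then follows from the boundedness of $|B_3(p_j)|$.

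The main obstacle will be producing the sharp $(1+r_j)^{-3}$ rate in $R_1$: a naive estimate by $|\nabla\log\rho|\,|\nabla\varphi_d|$ yields only $r_k^{-4}$ and, more critically, lacks the $\varepsilon|\ln\varepsilon|$ prefactor. It is the orthogonality $\hat r_k\perp\nabla\theta_{p_k}$ combined with the sharpened asymptotic \eqref{asymp4} for the radial projection $\hat r_k\cdot\nabla\varphi_{p_k}$ that supplies both the correct decay rate and the correct $\varepsilon|\ln\varepsilon|$ smallness; this is the essential quantitative content of the improvement from $\varphi_0$ to $\varphi_d$.
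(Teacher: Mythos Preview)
Your approach is essentially the same as the paper's: write $\mathcal{U}=\rho\,e^{i\varphi_d}$, split $E_2=E_{21}+E_{22}$, use the identity $\Delta\varphi_d+\tfrac{1}{x_1}\partial_{x_1}\varphi_d=0$ to kill the dangerous imaginary term, and reduce $E_{22}/\mathcal{U}$ via the vortex equation to $\sum_j r_j^{-2}-|\nabla\varphi_d|^2+\ldots+2i\sum_j\tfrac{S'(r_j)}{S(r_j)}\nabla r_j\cdot\nabla\varphi_d$. The paper then simply invokes \eqref{asymp1}--\eqref{asymp5} and says ``by carefully checking the terms'' to obtain the stated decay, whereas you make explicit the two cancellations that drive the rates---the orthogonality $\hat r_k\cdot\nabla\theta_{p_k}=0$ for $R_1$ and the matching of $\sum_j r_j^{-2}$ against the self--squares in $|\nabla\varphi_d|^2$ for $R_2$. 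Your explanation of why the orthogonality supplies both the $(1+r_k)^{-3}$ decay \emph{and} the $\varepsilon|\ln\varepsilon|$ smallness is precisely the content the paper leaves implicit.

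One small correction to your $L^9$ paragraph: after the Ginzburg--Landau cancellations on $B_3(p_j)$, the residual is not literally pointwise $O(\varepsilon|\ln\varepsilon|)$. The correction in \eqref{asymp4} produces contributions of size $O(\varepsilon|\ln r_j|)$ (e.g.\ from $r_j^{-2}-|\nabla\varphi_{p_j}|^2$ multiplied by $\rho\sim r_j$), which blow up logarithmically at $p_j$; this is why the paper remarks that ``the $L^\infty$ norm is not bounded near $p_j$, due to the presence of $\ln r_j$ term''. Since $\|\ln r_j\|_{L^9(B_3)}<\infty$, the $L^9$ bound $C\varepsilon|\ln\varepsilon|$ is unaffected, so your conclusion stands---just replace ``pointwise $O(\varepsilon|\ln\varepsilon|)$'' by ``$L^9$--bounded by $C\varepsilon|\ln\varepsilon|$''. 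Also, in your factorisation $\mathcal{U}=u_j u_{\mathcal{K}+j}\cdot W_j$, the smooth factor $W_j$ must collect the remaining $u_l$'s (with their phases), not just the $S_l$'s; otherwise the phases $\theta_l-\theta_{\mathcal{K}+l}$ for $l\neq j$ are lost.
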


\begin{proof}

We compute, in $B_{\frac{\rho}{5}}(p_j)$,
\begin{equation}\label{estimateofe21}
\begin{split}
\frac{\partial \mathcal U}{\partial x_2}&=\frac{\p \Pi_{j=1}^{2\m K}S_j}{\p x_2}e^{i\varphi_d}+i\mathcal{U}\nabla \varphi_d\\
&=\big[\sum_{j=1}^{2\m K}\frac{S'(r_j)}{S(r_j)}\frac{r_{j,2}}{r_j}+i \nabla \varphi_d\big]\mathcal{U}.
\end{split}	
\end{equation}
Hence in $(\cup_{j=1}^{2\m K} B_{3}(p_j))^c$, by (\ref{asymp5}),  we have
\begin{equation*}
\begin{split}
Re\Big[\frac{E_{21}}{i\mathcal{U}}\Big]&=\ve |\ln \ve|	\big[\sum_{j=1}^{2\m K}\frac{S'(r_j)}{S(r_j)}\frac{r_{j,2}}{r_j}\big]\\
&\leq C\sum_{j=1}^{2\m K}\frac{O(\ve^{1-\delta})}{(1+r_j)^3},\\
Im\Big[\frac{E_{21}}{i\mathcal{U}}\Big]&=\ve |\ln \ve|	[\nabla \varphi_d]\\
&\leq C\sum_{j=1}^{2\m K}\frac{O(\ve^{1-\delta})}{(1+r_j)}.
\end{split}	
\end{equation*}
We also have
\begin{equation*}
\|i\ve |\ln \ve|\,\partial_{x_2}\mathcal{U}\|_{L^9(\cup_j \{r_j\leq 3\})}\leq C\ve|\ln \ve|.	
\end{equation*}
Note that the $L^\infty$ norm is not bounded near $p_j$, due to the presence of $\ln r_j$ term.

Next, letting $S_j=S(r_j)$ and using the fact that
\begin{equation*}
\Delta S_j-\frac{S_j}{r_j^2}+(1-S_j^2)S_j=0,
\end{equation*}
one has
\begin{equation}\label{estimateofe22}
\begin{split}
E_{22}&=\Delta \mathcal{U}+(1-|\mathcal{U}|^2)\mathcal{U}+\frac{1}{x_1}\frac{\partial \mathcal{U}}{\partial x_1}\\
&=\mathcal{U}\Big[ \sum_{j=1}^{2\m K}\frac{1}{r_j^2}-|\nabla \varphi_d|^2+\frac{1}{x_1}\sum_{j=1}^{2\m K}\frac{S'(r_j)}{S(r_j)}\p_{x_1}r_j-\sum_{j=1}^{2\m K} \mathcal{Q}_j \\ &+2i\sum_{j=1}^{2\m K} \frac{S'(r_j)}{S(r_j)}\nabla r_j\cdot\nabla \varphi_d \Big]
\end{split}	
\end{equation}
where we have used the fact that
\begin{equation*}
\Delta \varphi_d+\frac{1}{x_1}\frac{\partial }{\partial x_1}\varphi_d=0.	
\end{equation*}
By carefully checking the terms, using (\ref{asymp1})-(\ref{asymp5}), away from the vortex points, one has

\begin{equation*}
\begin{split}
Re \Big[\frac{E_{22}}{i\mathcal{U}}\Big]
&\leq C\sum_{j=1}^{2\m K}\frac{O(\ve^{1-\delta})}{(1+r_j)^3},\\
Im\Big[\frac{E_{22}}{i\mathcal{U}}\Big]
&\leq C\sum_{j=1}^{2\m K}\frac{O(\ve^{1-\delta})}{(1+r_j)}.
\end{split}	
\end{equation*}
Moreover,
\begin{equation*}
\|E_{22}\|_{L^9(\cup_j\{ r_j\leq 3\})}\leq C\ve|\ln \ve|.	
\end{equation*}
Combining the estimates for $E_{21}$ and $E_{22}$, we obtain the desired estimates.

\end{proof}


\section{Linear theory}\label{sec4}

Now we set up the reduction procedure. The linear theory we use here will be the same one as that of \cite{Liu-Wei}. We recall the framework developed there in the sequel. As usual, we shall look for a solution of (\ref{e1}) in the form:

\begin{equation}
u:=\left(  \m U+ \m U\eta\right)  \chi+\left(  1-\chi\right)  \m U e^{\eta}, \label{pertubation}%
\end{equation}
where $\chi$ is a cutoff function such that
\begin{equation*}
\chi(x)=\sum_{j=1}^{2\m K}\tilde{\chi}(x-p_j)
\end{equation*}
and $\tilde{\chi}(s)=1$ for $s\leq 1$ and $\tilde{\chi}(s)=0$ for $s\geq 2$ and
$\eta=\eta_{1}+\eta_{2}i$ is complex valued function close to $0$ in
suitable norm which will be introduced below. We also assume that $\eta$ has the
same symmetry as $\m U.$ Note that near the vortice, $u$ is obtained from $\m U$ by an additive perturbation; while away from the vortice, $u$ is of the form $\m Ue^{\eta}$. The reason of choosing the perturbation $\eta$ in the form (\ref{pertubation}) is explained in Section 3 of \cite{del2006variational} and also in \cite{Liu-Wei}. Essentially, the form of the perturbation far away from the origin makes it easier to handle the decay rates of the error away from the origin.

\medskip
The conditions imposed on $u$ in (\ref{symmetryandboundary}) can be transmitted to $\eta$:
\begin{align}\label{bdyofpsi}
\begin{aligned}
\eta_1(x_1,x_2)=\eta_1(-x_1,x_2),
&\qquad
\eta_1(x_1,x_2)=-\eta_1(x_1,-x_2),
\\
\eta_2(x_1,x_2)=\eta_2(-x_1,x_2),
&\qquad
\eta_2(x_1,x_2)=\eta_2(x_1,-x_2),
\\
\frac{\partial \eta_1}{\partial x_1}(0,x_2)=0,
&\qquad
\frac{\partial \eta_2}{\partial x_1}(0,x_2)=0.
\end{aligned}
\end{align}

In view of (\ref{pertubation}), we can write
$u=\m Ue^{\eta}+\gamma,$ where
\[
\gamma:=\chi \m U\left(  1+\eta-e^{\eta}\right)  .
\]
Note that $\gamma$ is localized near the vortex points and of the order $o(\eta),$ for $\eta$ small.

Set $\mathcal{A}:=\left(  \chi+\left(  1-\chi\right)  e^{\eta}\right)  \m U.$ Then $u$ can  be written as
$$u=\m U\eta\chi+\mathcal{A}.$$
Following the computation in \cite{Liu-Wei}, we get
\[
\left(  1-\left\vert u\right\vert ^{2}\right) u =\left(  \m U\eta\chi+\mathcal{A}\right)
\left(  1-\left\vert \m U e^{\eta}+\gamma\right\vert ^{2}\right)  .
\]

The equation for $\eta$ becomes
\begin{equation}
-\mathcal{A}\mathbb{L}\left(  \eta\right)  =\left(  1+\eta\right)  \chi E_2\left(
\m U\right)  +\left(  1-\chi\right)  e^{\eta}E_2\left(  \ U\right)  +N_{0}\left(
\eta\right)  , \label{GPF}%
\end{equation}
where $E_2\left(  \m U\right)  $ represents the error of the approximate solution $\m U$, and
\begin{equation}
\label{Lcali}
\mathbb{L}\eta:=i\varepsilon\frac{\p\eta}{\p x_2}+\Delta\eta+2u^{-1}\nabla
u\cdot\nabla\eta-2\left\vert u\right\vert ^{2}\eta_{1}+\frac{1}{x_1}\frac{\p \eta}{\p x_1},
\end{equation}
while $N_{0}$ is $o(\eta),$ and explicitly given by%
\begin{align*}
 N_{0}\left(  \eta\right)  &:=\left(  1-\chi\right)  \m Ue^{\eta}\left\vert
\nabla\eta\right\vert ^{2}+i\varepsilon|\ln \ve|\left(  \m U\left(  1+\eta-e^{\eta
}\right)  \right)  \partial_{x_2}\chi\\
&+\frac{1}{x_1}\left(  \m U\left(  1+\eta-e^{\eta
}\right)  \right)  \partial_{x_1}\chi+2\nabla\left(  \m U\left(  1+\eta-e^{\eta}\right)  \right)  \cdot\nabla
\chi+\m U\left(  1+\eta-e^{\eta}\right)  \Delta\chi\\
&  -2\m U\left\vert \m U\right\vert ^{2}\eta\eta_{1}\chi-\left(  \mathcal{A}+\m U\eta\chi\right)
\left[  \left\vert \m U\right\vert ^{2}\left(  e^{2\eta_{1}}-1-2\eta_{1}\right)
+\left\vert \gamma\right\vert ^{2}+2\operatorname{Re}\left(  \m Ue^{\eta}%
\bar{\gamma}\right)  \right]  .
\end{align*}

Let us write this equation as
\begin{equation}\label{realequation}
\mathbb{L}\left(  \eta\right)  =-{\m U}^{-1}E_2\left(  \m U\right)  +N\left(
\eta\right)  ,
\end{equation}
where
\begin{equation*}
\begin{split}
N(\eta)&=-\left\vert \m U\right\vert ^{2}\left(  e^{2\eta
_{1}}-1-2\eta_{1}\right)  +\left\vert \nabla\eta\right\vert ^{2}\\
&  +i\varepsilon|\ln \ve| \mathcal{A}^{-1}\left(  \m U\left(  1+\eta-e^{\eta}\right)  \right)
\partial_{x_2}\chi+\frac{1}{\mathcal{A}x_1}\left(  \m U\left(  1+\eta-e^{\eta
}\right)  \right)  \partial_{x_1}\chi\\
&+2\mathcal{A}^{-1}\nabla\left(  \m U\left(  1+\eta-e^{\eta}\right)
\right)  \cdot\nabla\chi\\
&  +\mathcal{A}^{-1}\m U\left(  1+\eta-e^{\eta}\right)  \Delta\chi-\mathcal{A}^{-1}\m U\chi\left\vert
\nabla\eta\right\vert ^{2}-\left\vert \gamma\right\vert ^{2}%
-2\operatorname{Re}\left(  \m Ue^{\eta}\bar{\gamma}\right) \\
&  +\mathcal{A}^{-1}\m U\eta\chi\left[  {\m U}^{-1}E_2\left(  \m U\right)  -2\left\vert \m U\right\vert
^{2}\eta_{1}-\left\vert \m U\right\vert ^{2}\left(  e^{2\eta_{1}}-1-2\eta
_{1}\right)  -\left\vert \gamma\right\vert ^{2}-2\operatorname{Re}\left(
\m Ue^{\eta}\bar{\gamma}\right)  \right]  .
\end{split}
\end{equation*}
This nonlinear equation, equivalent to the original GP equation,  is the one we eventually want to solve. Observe that in $N\left(  \eta\right)  $, except $\left\vert \m U\right\vert
^{2}\left(  e^{2\eta_{1}}-1-2\eta_{1}\right)  -\left\vert \nabla
\eta\right\vert ^{2},$ other terms are all localized near the vortex points.

\subsection{A Linear problem}
By the definition of our vortex configuration, one can see that the terms contain $\ve |\ln \ve|$ and $\frac{1}{x_1}$ can be viewed as small perturbation near the vortex points.

Let us first consider the following linear problem:
\begin{equation}\label{linear}
\mathbb{L}(\eta)=h, \, Re\int_{\R^2}\overline{\m U \eta }Z_{\ell j}=0, \, \eta \, \mbox{ satisfies }\, (\ref{bdyofpsi}),
\end{equation}

where
\begin{equation*}
Z_{\ell j}=\alpha_\ell\nabla u_\ell\, \tilde{\rho}_\ell(x), \alpha_\ell=\frac{\m U}{u_\ell},
\end{equation*}
and $\tilde{\rho}_\ell$ is a cutoff function centered at $p_\ell$ with support in $B_{\frac{\rho}{5}}(p_\ell)$.
We shall establish a priori estimates for this problem.
The following weighted norms and linear theory has been studied in \cite{Liu-Wei}.

Recall that $r_{j},j=1,\cdot\cdot\cdot,\m K,$ represent
the distance to the $j$-th vortex point. Let $w$ be a weight function defined
by
\[
w(z):=\left(  \sum_{j=1}^{2\m K}\left(  1+r_{j}\right)  ^{-1}\right)  ^{-1}.
\]
This function measures the minimal distance from the point $z$ to those vortex points. We
use $B_{a}\left(  z\right)  $ to denote the ball of radius $a$ centered at
$z.$ Let $\alpha,\sigma\in\left(  0,1\right)  $ be small positive numbers. For complex valued function
$\eta=\eta_{1}+\eta_{2}i,$ we define the following weighted
norm:
\begin{align*}
&  \left\Vert \eta\right\Vert _{\ast}\\
&  =\left\Vert u\eta\right\Vert _{W^{2,9}\left(  w<3\right)  }+\left\Vert
w^{1+\sigma}\eta_{1}\right\Vert _{L^{\infty}\left(  w>2\right)  }+\left\Vert
w^{2+\sigma}(|\nabla\eta_{1}|+|\nabla^{2}\eta_{1}|)\right\Vert _{L^{\infty
}\left(  w>2\right)  }\\
&  +\sup_{z\in\left\{  w>2\right\}  }\sup_{z_{1},z_{2}\in B_{w/3}\left(
z\right)  }\left(  \frac{\left\vert
\nabla\eta_{1}\left(  z_{1}\right)  -\nabla\eta_{1}\left(  z_{2}\right)
\right\vert +\left\vert \nabla^{2}\eta_{1}\left(  z_{1}\right)  -\nabla
^{2}\eta_{1}\left(  z_{2}\right)  \right\vert }{w\left(  z\right)  ^{-2-\sigma-\alpha}\left\vert z_{1}%
-z_{2}\right\vert ^{\alpha}}\right) \\
&  +\left\Vert w^{\sigma}\eta_{2}\right\Vert _{L^{\infty}\left(  w>2\right)
}+\left\Vert w^{1+\sigma}\nabla\eta_{2}\right\Vert _{L^{\infty}\left(
w>2\right)  }+\left\Vert w^{2+\sigma}\nabla^{2}\eta_{2}\right\Vert
_{L^{\infty}\left(  w>2\right)  }\\
&  +\sup_{z\in\left\{  w>2\right\}  }\sup_{z_{1},z_{2}\in B_{w/3}\left(
z\right)  }\left(  w\left(  z\right)  ^{1+\sigma+\alpha}\frac{\left\vert
\nabla\eta_{2}\left(  z_{1}\right)  -\nabla\eta_{2}\left(  z_{2}\right)
\right\vert }{\left\vert z_{1}-z_{2}\right\vert ^{\alpha}}\right) \\
&  +\sup_{z\in\left\{  w>2\right\}  }\sup_{z_{1},z_{2}\in B_{w/3}\left(
z\right)  }\left(  w\left(  z\right)  ^{2+\sigma+\alpha}\frac{\left\vert
\nabla^{2}\eta_{2}\left(  z_{1}\right)  -\nabla^{2}\eta_{2}\left(
z_{2}\right)  \right\vert }{\left\vert z_{1}-z_{2}\right\vert ^{\alpha}
}\right).
\end{align*}
Basically, the norm means that the real part of $\eta$ decays like $w^{-1-\sigma}$ and its first and second derivatives decay like $w^{-2-\sigma}$. Moreover, the imaginary part of $\eta$ only decays as $w^{-\sigma}$, but its first and second derivative decay as $w^{-1-\sigma}$ and  $w^{-2-\sigma}$ respectively. It is worth mentioning that the H\"{o}lder norms are taken into account in the definition because eventually we shall use the Schauder estimates. Moreover, near the vortex points, we use the $L^p$ norm, because the $L^\infty$ norm is not bounded there.

On the other hand, for complex valued function $h=h_{1}+ih_{2},$ we define the following weighted H\"{o}lder norm
\begin{align*}
\left\Vert h\right\Vert _{\ast\ast}  &  :=\left\Vert uh\right\Vert
_{L^{9}\left(  w<3\right)  }+\left\Vert w^{1+\sigma}h_{1}\right\Vert
_{L^{\infty}\left(  w>2\right)  }\\
&  +\left\Vert w^{2+\sigma}\nabla h_{1}\right\Vert _{L^{\infty}\left(
w>2\right)  }+\left\Vert w^{2+\sigma}h_{2}\right\Vert _{L^{\infty}\left(
w>2\right)  }\\
&  +\sup_{z\in\left\{  w>2\right\}  }\sup_{z_{1},z_{2}\in B_{w/3}\left(
z\right)  }\left(  w\left(  z\right)  ^{2+\sigma+\alpha}\frac{\left\vert
\nabla h_{1}\left(  z_{1}\right)  -\nabla h_{1}\left(  z_{2}\right)
\right\vert }{\left\vert z_{1}-z_{2}\right\vert ^{\alpha}}\right) \\
&  +\sup_{z\in\left\{  w>2\right\}  }\sup_{z_{1},z_{2}\in B_{w/3}\left(
z\right)  }\left(  w\left(  z\right)  ^{2+\sigma+\alpha}\frac{\left\vert
h_{2}\left(  z_{1}\right)  -h_{2}\left(  z_{2}\right)  \right\vert
}{\left\vert z_{1}-z_{2}\right\vert ^{\alpha}}\right)  .
\end{align*}
This definition tells us that the real and imaginary parts of $h$ have different decay rates. Moreover, intuitively we require $h_{1}$ to gain one more power of decay at
infinity after taking one derivative. The choice of this norm is partly decided by the decay and smooth properties of $E_2(\m U)$.

We have the following a priori estimate for solutions of the equation (\ref{linear}).

\begin{lemma}[Proposition 4.5 in \cite{Liu-Wei}]
Let $\varepsilon>0$ be small. Suppose $\eta$ is a solution of (\ref{linear}) with $\left\Vert h\right\Vert _{\ast\ast}< \infty$. Then
$$\left\Vert \eta\right\Vert _{\ast}\leq C\varepsilon^{-\sigma}\left\vert
\ln\varepsilon\right\vert \left\Vert h\right\Vert _{\ast\ast}$$
 where $C$ is a constant independent of $\varepsilon$ and $h$.
\end{lemma}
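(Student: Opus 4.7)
The plan is to follow the scheme used for Proposition 4.5 in \cite{Liu-Wei}, arguing by contradiction and doing a combined inner-outer analysis, while carefully tracking the two new ingredients specific to the three dimensional axially symmetric reduction: the extra drift $\tfrac{1}{x_1}\partial_{x_1}\eta$ and the fact that the coefficient in front of $\partial_{x_2}\eta$ is now $\varepsilon|\ln\varepsilon|$ rather than $\varepsilon$. Concretely, I would suppose that the conclusion fails, so there exist sequences $\varepsilon_n\to 0$, $h_n$, and $\eta_n$ solving (\ref{linear}) with the orthogonality and symmetry conditions, $\|\eta_n\|_*=1$, and $\varepsilon_n^{-\sigma}|\ln\varepsilon_n|\,\|h_n\|_{**}\to 0$. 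The aim is to derive a contradiction by showing $\eta_n\to 0$ in the $\|\cdot\|_*$ norm.

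The first step would be the inner analysis near each vortex point $p_\ell$. Translate the coordinate so that $p_\ell$ becomes the origin; since $|p_{\ell,1}|\sim \varepsilon^{-1}$ and $\rho\sim (\varepsilon|\ln\varepsilon|)^{-1}$, on any fixed ball the coefficients of $\mathbb{L}$ converge to those of the linearization of the Ginzburg--Landau equation around the standard degree $\pm 1$ vortex $v_{\pm}$. The drift term $\tfrac{1}{x_1}\partial_{x_1}\eta$ is of size $O(\varepsilon)$ there, and $i\varepsilon|\ln\varepsilon|\partial_{x_2}\eta$ of size $O(\varepsilon|\ln\varepsilon|)$, so both vanish in the limit. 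Using Schauder/$W^{2,9}$ bounds from $\|\eta_n\|_*\le 1$, pass to a limit $\eta_\infty$ in $W^{2,9}_{\rm loc}$ satisfying the limiting linearized Ginzburg--Landau equation. The decay built into $\|\cdot\|_*$ forces $\eta_\infty$ to lie in the known nondegeneracy class, so $\eta_\infty$ is a linear combination of the translation kernel elements $\partial_{x_1}v_\pm$, $\partial_{x_2}v_\pm$. The orthogonality conditions $\mathrm{Re}\int\overline{\mathcal U\eta_n}\,Z_{\ell j}=0$ pass to the limit and kill these, so $\eta_\infty\equiv 0$ near each $p_\ell$.

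The second step is the outer analysis on $\{w>2\}$, where we separate real and imaginary parts. Writing $u=\mathcal U e^{\eta}+\gamma$, the real part equation reduces (up to small terms) to $\Delta\eta_1-2|\mathcal U|^2\eta_1=$ small, which has exponential-type decay because of the coercive zeroth order term; combined with the boundary decay obtained from the inner vanishing, a barrier argument with $w^{-1-\sigma}$-type barriers gives $\|w^{1+\sigma}\eta_{1,n}\|_{L^\infty}\to 0$ along with the second derivative bounds. For the imaginary part, the leading equation is the divergence form operator $\mathrm{div}(|\mathcal U|^2\nabla\eta_2)\approx$ small, augmented by the two new terms. Here the decay is only $w^{-\sigma}$, and the main issue is that $\eta_2$ does not satisfy a coercive equation; one therefore uses the weighted $L^\infty$ bound combined with test function arguments (multiplying by suitable cutoffs of $\eta_2$ and integrating) together with the orthogonality/symmetry to conclude $\|w^{\sigma}\eta_{2,n}\|_{L^\infty}\to 0$. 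This contradicts $\|\eta_n\|_*=1$ and finishes the argument.

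The main obstacle is tracking the factor $\varepsilon^{-\sigma}|\ln\varepsilon|$, which is where the three dimensional terms make themselves felt. The contribution $i\varepsilon|\ln\varepsilon|\partial_{x_2}\eta$ acts as a right hand side of size $\varepsilon|\ln\varepsilon|\cdot\|w^{1+\sigma}\nabla\eta_2\|_{L^\infty}\,w^{-1-\sigma}$ in the real-part equation, and $\tfrac{1}{x_1}\partial_{x_1}\eta$ is $O(\varepsilon)\cdot|\nabla\eta|$; compared with \cite{Liu-Wei} these introduce an extra $|\ln\varepsilon|$ factor but no change in the power of $w$. A weighted Fredholm estimate on the outer problem in the $w^{-\sigma}$ scale only yields control up to $\varepsilon^{-\sigma}$ because the imaginary part decays so slowly; combining the two losses yields the claimed $\varepsilon^{-\sigma}|\ln\varepsilon|$. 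The delicate point to verify is therefore the quantitative outer estimate for $\eta_2$: that multiplying the imaginary-part equation by $w^{2\sigma}\eta_2$ and integrating, using the symmetry (\ref{bdyofpsi}) to control the Neumann boundary contribution at $\{x_1=0\}$, yields an integral inequality whose solution obeys $\|w^\sigma\eta_2\|_{L^\infty}\le C\varepsilon^{-\sigma}|\ln\varepsilon|\|h\|_{**}$, with the $\varepsilon^{-\sigma}$ arising from the volume of the support of $\mathcal U$ up to the scale $\rho\sim (\varepsilon|\ln\varepsilon|)^{-1}$. Once this is in hand, the contradiction scheme closes exactly as in the two dimensional case.
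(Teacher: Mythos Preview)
The paper does not actually supply a proof of this lemma: it is stated verbatim as ``Proposition 4.5 in \cite{Liu-Wei}'' and then immediately used, with no argument given in the present paper. So there is nothing in the paper to compare your proposal against beyond the citation itself.

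That said, your sketch is a faithful outline of the contradiction/blow-up scheme that underlies Proposition 4.5 in \cite{Liu-Wei} (and, before that, \cite{del2006variational}): inner limit to the linearized Ginzburg--Landau problem plus nondegeneracy and orthogonality to kill the kernel, outer barrier/weighted estimates treating $\eta_1$ and $\eta_2$ separately. You have also correctly isolated the only new features here---the drift $\tfrac{1}{x_1}\partial_{x_1}\eta$, harmless because $x_1\sim\varepsilon^{-1}$ near the vortices, and the coefficient $\varepsilon|\ln\varepsilon|$ in place of $\varepsilon$---and observed that they do not change the structure of the argument. In short, your proposal is not a different route; it is precisely the Liu--Wei proof the paper is invoking by reference, written out rather than cited.
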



We now consider the following linear projected problem:
\begin{equation}\label{projectedproblem}
\left\{\begin{array}{l}
	\mathbb{ L}(\eta)=h+\sum_{j=1}^{2\m K}\sum_{j=1}^2 c_{\ell j}Z_{\ell j}, \\ Re\int_{\R^2}\overline{{\m U} \eta }Z_{\ell j}\, dx=0, \\
\eta \mbox{ satisfies }(\ref{bdyofpsi}).
\end{array}
\right.
\end{equation}
We state the following existence result:
\begin{proposition}\label{pro-of-existence}
There exists constant $C$, depending only on $\alpha, \, \sigma$ such that for all $\ve  $ small, the following holds: if $\|h\|_{**}<\infty$, there exists a unique solution $(\eta, \{c_{\ell j}\})=T_\ve(h)$ to (\ref{projectedproblem}). Furthermore, there holds
\begin{equation*}
\|\eta\|_*\leq C\ve^{-\sigma}|\ln\ve|\|h\|_{**}	.
\end{equation*}	
\end{proposition}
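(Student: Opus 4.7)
The strategy is to leverage the a priori bound stated just above (Proposition 4.5 of \cite{Liu-Wei}) and combine it with a Fredholm-type argument on expanding balls. Uniqueness is immediate: if $h=0$, then any solution $(\eta, \{c_{\ell j}\})$ must satisfy $\|\eta\|_*\leq C\varepsilon^{-\sigma}|\ln\varepsilon|\cdot 0=0$, whence $\eta\equiv 0$ and consequently $\sum_{\ell,j} c_{\ell j}Z_{\ell j}\equiv 0$. Since the $Z_{\ell j}$ are constructed from cutoff functions with disjoint supports concentrated near the distinct vortex points $p_\ell$, they are linearly independent and thus all $c_{\ell j}=0$. Hence the main task is to produce existence.

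For existence, the plan is to approximate on finite domains. Fix $R$ with $\varepsilon^{-1}\ll R<\infty$ and consider the same projected problem on the ball $B_R(0)\cap H$ with zero Dirichlet data on the outer boundary together with the Neumann and symmetry conditions (\ref{bdyofpsi}); the latter allow us to work in the subspace of functions with the prescribed parity. On this bounded domain, the operator $\mathbb{L}$ is a compact perturbation of an elliptic operator of the form $\Delta-2|\mathcal{U}|^2\eta_1$ after absorbing the transport term $i\varepsilon|\ln\varepsilon|\partial_{x_2}$, the radial correction $\frac{1}{x_1}\partial_{x_1}$, and the first-order coefficient $2u^{-1}\nabla u\cdot\nabla\eta$ into the compact part; by standard Fredholm theory it has index zero from an appropriate weighted Sobolev space into its dual. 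The orthogonality constraints $\mathrm{Re}\int\overline{\mathcal U\eta}Z_{\ell j}=0$ provide exactly the $2\mathcal K$ codimension needed to kill the near-kernel generated by the approximate translation modes $Z_{\ell j}$, so Fredholm alternative yields a unique solution $(\eta_R,\{c_{\ell j}^R\})$ on $B_R$.

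The main obstacle is to propagate the a priori estimate uniformly in $R$ so that one can pass to the limit $R\to\infty$. The blow-up/contradiction argument from \cite{Liu-Wei} should carry over with only cosmetic changes, because the two new features of the 3D problem are harmless at the linearized level: the prefactor $\varepsilon|\ln\varepsilon|$ in front of $\partial_{x_2}$ simply replaces $\varepsilon$ by $\varepsilon|\ln\varepsilon|$ in the limit profiles and changes the final estimate only by the factor $|\ln\varepsilon|$ that already appears in the statement, while the additional radial term $\frac{1}{x_1}\partial_{x_1}\eta$ is of size $O(\varepsilon)$ near each vortex thanks to assumption $(\mathbf{A1})$ ($|p_{j,1}|\sim\varepsilon^{-1}$) and its contribution to the weighted H\"older norms is of lower order compared with the leading terms; the boundary $\{x_1=0\}$ is kept away from all vortices at distance $\sim\varepsilon^{-1}$, so the coefficient $1/x_1$ causes no singular difficulty in the regions that matter. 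In particular, at each blow-up limit around $p_\ell$, one recovers the same nondegeneracy of the linearized Ginzburg-Landau operator (whose kernel is precisely the span of $\partial_{x_1}v_\pm,\partial_{x_2}v_\pm$, killed by the orthogonality conditions), and in the far region one obtains a harmonic-type limit whose unique bounded decaying solution is trivial.

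Finally, with the uniform bounds $\|\eta_R\|_*\leq C\varepsilon^{-\sigma}|\ln\varepsilon|\|h\|_{**}$ and $|c_{\ell j}^R|\leq C\|h\|_{**}$ in hand, a diagonal subsequence $R\to\infty$ converges: interior Schauder and $W^{2,9}$ estimates give local strong convergence on compact sets, while the weighted $L^\infty$ and H\"older components of $\|\cdot\|_*$ pass to the limit by lower semicontinuity. The orthogonality conditions are stable under this limit because each $Z_{\ell j}$ is compactly supported, and the limit $(\eta,\{c_{\ell j}\})$ solves (\ref{projectedproblem}) on $\mathbb{R}^2$ with the claimed estimate. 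This completes the proof outline.
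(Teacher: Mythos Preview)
Your proposal is correct and follows essentially the same route as the paper: solve the projected problem on large balls via Fredholm alternative in $H_0^1$, obtain the a priori estimate uniformly in the radius by the same blow-up argument as in \cite{Liu-Wei}, and pass to the limit. The paper's own proof is very terse (it simply refers to Proposition 4.1 of \cite{del2006variational}), so your additional remarks on why the new terms $\varepsilon|\ln\varepsilon|\,\partial_{x_2}$ and $\frac{1}{x_1}\partial_{x_1}$ do not affect the argument are a welcome elaboration rather than a departure.
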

\begin{proof}
The proof is similar to that of Proposition 4.1 in \cite{del2006variational}. Instead of solving (\ref{projectedproblem}) in $\R^2$, we solve it in a bounded domain first:
\begin{equation*}
\left\{\begin{array}{l}
\mathbb{ L}(\eta)=h+\sum_{\ell=1}^{2\m K}\sum_{j=1}^2 c_{\ell j}Z_{\ell j}, \, Re\int_{B_M}\overline{\m U\eta}Z_{\ell j}\, dx=0 \mbox{ in } B_M(0)\\
\eta=0\mbox{ on }\p B_M(0), \\
 \eta \mbox{ satisfies \, the \, condition \, (\ref{bdyofpsi})},
\end{array}
\right.
\end{equation*}
where $M$ large enough. By the standard proof of a priori estimates, we also obtain the following estimates for any solution $\eta_M$ of the above problem with
\begin{equation*}
\|\eta_M\|_*\leq C\ve^{-\sigma}|\ln\ve|\|h\|_{**}. 	
\end{equation*}
By working in the Sobolev space $H_0^1(B_M)$, the existence will follow by Fredholm alternatives. Now letting $M\to \infty$, we obtain a solution of the required properties.
	
\end{proof}

\subsection{Projected nonlinear problem}\label{subsec7}
From now on, we will denote by $T_\ve(h)$ the solution of (\ref{projectedproblem}). We consider the following nonlinear projected problem :

\begin{equation}\label{nonlinearproblem}
\left\{\begin{array}{l}
\mathbb{ L}(\eta)+\frac{E_2(\m U)}{\m U}+N(\eta)=\sum_{\ell=1}^{2\m K}\sum_{j=1}^2 c_{\ell j}Z_{\ell j}, \, \\
 Re\int_{\R^2} \ov{\m U \eta }Z_{\ell j}\, dx=0 , \\
 \eta \mbox{ satisfies \, the \, condition }\, (\ref{bdyofpsi}).
\end{array}
\right.		
\end{equation}
Using the operator $T_\ve  $ defined in Proposition \ref{pro-of-existence}, we can write the above problem as
\begin{equation*}
\eta=T_\ve(\frac{E_2(\m U)}{\m U}-N(\eta)):=G_\ve(\eta). 	
\end{equation*}
Using the error estimates in Lemma \ref{lemmaforerror}, we have for $r_j\sim \ve^{-1}$,
\begin{equation*}
Re(\frac{E_2}{\m U})\sim \frac{\ve^{1-\delta}}{r_j}, \,
Im(\frac{E_2}{\m U})\sim \frac{\ve^{1-\delta}}{r_j^3}.
\end{equation*}
More precisely, if one check the express of the error, and using the explicate expression for $A_a$ in Section \ref{sec2}, one can check by direct calculation that for $r_j>>\ve^{-1}$,
\begin{equation*}
Re(\frac{E_2}{\m U})\sim \frac{\ve^{1-\delta}}{r_j^2}, \,
Im(\frac{E_2}{\m U})\sim \frac{\ve^{1-\delta}}{r_j^3}.
\end{equation*}
Taking this into account, one has
\begin{equation*}
\|{\m U}^{-1}E_2(\m U)\|_{**}\leq C\ve^{1-\delta}
\end{equation*}
for any $\delta>0$.

Let
\begin{equation*}
\eta\in B:=\{\|\eta\|_*\leq C\ve^{1-\beta}\}	
\end{equation*}
for $\beta\in (\delta+\sigma,1)$.
Then using the explicit form of $N(\eta)$, we have

\begin{equation*}
\|G_\ve(\eta)\|_{*}\leq C(\| N(\eta)\|_{**}+\|{\m U}^{-1}E_2(\m U)\|_{**})\leq C\ve^{1-\beta}	
\end{equation*}
and
\begin{equation*}
\|G_\ve(\eta)-G_\ve(\tilde{\eta})\|_{*}\leq o(1)\|\eta-\tilde{\eta}\|_*
\end{equation*}
for all $\eta, \, \tilde{\eta}\in B$.
By contraction mapping theorem, we obtain the following:
\begin{proposition}
	There exists constant $C$ and $\beta$ small, depending only on $\alpha, \, \sigma$ such that for all $\ve  $ small, the following holds:  there exists a unique solution $(\eta_{\ve, \{p_i\}}, \{c_{ij}\})=T_\ve(h)$ to (\ref{nonlinearproblem}). Furthermore, there holds
\begin{equation*}
\|\eta\|_*\leq C\ve^{1-\beta}	,
\end{equation*}
and $\eta_{\ve, \{ p_i \}}$ is continuous in $\{p_i\}$.
\end{proposition}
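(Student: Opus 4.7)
The plan is to set up the nonlinear projected problem (\ref{nonlinearproblem}) as a fixed point equation for the map $G_\varepsilon(\eta) := T_\varepsilon\bigl(\tfrac{E_2(\mathcal{U})}{\mathcal{U}} - N(\eta)\bigr)$ on a suitable ball in the $\|\cdot\|_*$-norm, and then invoke the Banach contraction principle. This reduces the argument to three ingredients: (i) a bound on the error $\|\mathcal{U}^{-1}E_2(\mathcal{U})\|_{**}$; (ii) a nonlinear estimate $\|N(\eta)\|_{**}$ that is smaller by a factor of $\varepsilon^{1-\beta}$ times the linear bound; and (iii) a Lipschitz estimate on $N$. The $a\ priori$ bound from Proposition~\ref{pro-of-existence}, namely $\|T_\varepsilon(h)\|_* \leq C\varepsilon^{-\sigma}|\ln\varepsilon|\,\|h\|_{**}$, already dictates the feasible choice of $\beta$: combined with (i) it gives $\|G_\varepsilon(0)\|_*\leq C\varepsilon^{-\sigma}|\ln\varepsilon|\cdot\varepsilon^{1-\delta}$, which is $\leq \tfrac12 \varepsilon^{1-\beta}$ provided $\beta>\delta+\sigma$ (with $\delta$ and $\sigma$ both small). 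This explains why the pre-text fixes $\beta\in(\delta+\sigma,1)$.

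First I would verify the error estimate as computed in the excerpt, namely $\|\mathcal{U}^{-1}E_2(\mathcal{U})\|_{**}\leq C\varepsilon^{1-\delta}$, by plugging the pointwise bounds of Lemma~\ref{lemmaforerror} for the real and imaginary parts of $E_2/\mathcal{U}$ into the definition of the $\|\cdot\|_{**}$-norm, and checking the $L^9$ bound on the balls $B_3(p_j)$ directly from Lemma~\ref{lemmaforerror}. Next I would work through $N(\eta)$ term by term. The quadratic terms $|\mathcal{U}|^2(e^{2\eta_1}-1-2\eta_1)$ and $|\nabla\eta|^2$ are controlled by $\|\eta\|_*^2$ after using the weighted decay of $\eta_1$ and $\nabla\eta_{1,2}$ built into the $*$-norm, and these land in the $**$-norm with the correct weights since squaring the $w$-decay rates improves them. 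The cutoff terms $(\mathcal{U}(1+\eta-e^\eta))\partial_{x_\ell}\chi$, $(\mathcal{U}(1+\eta-e^\eta))\Delta\chi$ and $\nabla(\mathcal{U}(1+\eta-e^\eta))\cdot\nabla\chi$ are supported in the annuli $\{1\leq|z-p_j|\leq 2\}$, where $w\sim 1$ and where $(1+\eta-e^\eta)=O(\eta^2)$; together with the small prefactors $\varepsilon|\ln\varepsilon|$ and $1/x_1\sim\varepsilon$, these contribute at most $O(\varepsilon\|\eta\|_*^2 + \varepsilon|\ln\varepsilon|\,\|\eta\|_*)$. The remaining $\gamma$-terms are all $O(\eta^2)$ and localized to the vortex cores, so they are controlled in the $L^9$ piece of the $**$-norm. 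Summing, $\|N(\eta)\|_{**}\leq C(\varepsilon|\ln\varepsilon|\,\|\eta\|_*+\|\eta\|_*^2)$, which for $\eta\in B$ is at most $C(\varepsilon^{2-\beta}|\ln\varepsilon|+\varepsilon^{2(1-\beta)})=o(\varepsilon^{1-\beta})$ as long as $\beta<1/2$.

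The Lipschitz bound $\|N(\eta)-N(\tilde\eta)\|_{**}\leq o(1)\|\eta-\tilde\eta\|_*$ for $\eta,\tilde\eta\in B$ is obtained in exactly the same way: each term in $N$ is either quadratic in $\eta$ (so its difference is bounded by $(\|\eta\|_*+\|\tilde\eta\|_*)\|\eta-\tilde\eta\|_*\leq C\varepsilon^{1-\beta}\|\eta-\tilde\eta\|_*$) or carries an $\varepsilon|\ln\varepsilon|$ or $1/x_1$ prefactor in a region where $w\sim 1$. Combining with the linear estimate, $G_\varepsilon$ satisfies
\begin{equation*}
\|G_\varepsilon(\eta)\|_*\leq C\varepsilon^{-\sigma}|\ln\varepsilon|\bigl(\|\mathcal{U}^{-1}E_2(\mathcal{U})\|_{**}+\|N(\eta)\|_{**}\bigr)\leq C\varepsilon^{1-\beta},
\end{equation*}
and $\|G_\varepsilon(\eta)-G_\varepsilon(\tilde\eta)\|_*\leq o(1)\|\eta-\tilde\eta\|_*$, so $G_\varepsilon:B\to B$ is a contraction and Banach's theorem produces a unique fixed point $\eta_{\varepsilon,\{p_i\}}$ together with multipliers $c_{\ell j}$ arising from $T_\varepsilon$. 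Continuity of $\eta_{\varepsilon,\{p_i\}}$ in $\{p_i\}$ then follows from the uniqueness of the fixed point together with the fact that $\mathcal{U}$, $E_2(\mathcal{U})$, and $T_\varepsilon$ depend continuously (in the relevant norms) on the configuration $\{p_i\}$ satisfying (\textbf{A1}).

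The main obstacle I anticipate is bookkeeping for $N(\eta)$: in particular, tracking the exact weights $w^{1+\sigma}$, $w^{2+\sigma}$ for the real/imaginary components and ensuring that the quadratic terms such as $|\nabla\eta|^2$ actually land in the imaginary-part weighted norm $w^{2+\sigma}$ rather than just $w^{1+\sigma}$. The extra terms peculiar to the 3D formulation — the $(1/x_1)\partial_{x_1}$ contributions in $\mathbb{L}$ and the $\partial_{x_1}\chi$ term in $N$ — have to be handled by noting that on the support of the cutoff derivatives one has $x_1\sim\varepsilon^{-1}$, so they contribute only $O(\varepsilon)$ factors and are effectively negligible perturbations of the 2D analysis of \cite{Liu-Wei}.
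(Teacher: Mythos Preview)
Your proposal is correct and follows essentially the same route as the paper: rewrite the projected nonlinear problem as a fixed point equation $\eta=G_\varepsilon(\eta):=T_\varepsilon(\mathcal{U}^{-1}E_2(\mathcal{U})-N(\eta))$, use the error bound $\|\mathcal{U}^{-1}E_2(\mathcal{U})\|_{**}\leq C\varepsilon^{1-\delta}$ and the explicit form of $N(\eta)$ to show $G_\varepsilon$ maps the ball $B=\{\|\eta\|_*\leq C\varepsilon^{1-\beta}\}$ into itself and is a contraction there, then conclude by Banach's theorem. The paper records exactly these steps (with less term-by-term detail for $N(\eta)$) in the paragraph immediately preceding the proposition.
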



\section{The reduced problem and the multiple vortex rings solutions}\label{sec5}
\subsection{The reduced problem}

To find a real solution to problem (\ref{realequation}), we solve the reduced problem by finding the positions of the vortex points $\{p_i\}$ such that the coefficients $c_{\ell j}$ in (\ref{nonlinearproblem}) are zero for small $\ve $. In the previous section, we have deduced the existence of $\eta$ to the projected nonlinear problem:
\begin{equation*}
\mathbb{L}\eta+\frac{E_2}{\m U}+N(\eta)=\sum_{j}c_{j}\frac{\nabla u_j}{u_j}\tilde{\rho}_j(x).	
\end{equation*}
So $c_j=0$ is equivalent to
\begin{equation}\label{reduction}
Re\int_{\R^2}u_j[\mathcal{L}\eta+\frac{E_2}{\mathcal{U}}+ N(\eta)]	\nabla \bar{u}_jdx=0.
\end{equation}

By the relation of $u_j\mathbb{L}$ and $L_0(\phi_j)$ where $\phi_j=u_j\eta$,
\begin{equation*}
u_j\mathbb{L}(\eta)=L_0(\phi_j)+o(\frac{1}{\rho^2})\phi_j,	
\end{equation*}
where
\begin{equation*}
L_0(\phi)=\Delta \phi+(1-S^2)\phi-2Re(\bar{u}_0\phi)u_0.
\end{equation*}
One has, by integration by parts,
\begin{equation*}
\begin{split}
Re\int_{\R^2}u_j\mathbb{L}\eta \, \nabla \bar{u}_jdx&=Re\int_{\R^2}(L_0+O(\frac{1}{\rho^2})\phi_j\, \nabla \bar{u}_jdx\\
&=Re\int_{\R^2}\phi_j L_0(\nabla \bar{u}_j)dx+o(\ve)=o(\ve),
\end{split}	
\end{equation*}
and using the expression of $ N(\psi)$,
\begin{equation*}
Re\int_{\R^2}u_j N(\eta) \, \nabla \bar{u}_jdx=o(\ve)	.
\end{equation*}

We now compute
$$Re\int_{\R^2}u_j\frac{E_2}{\mathcal{U}}\nabla \bar{u}_jdx.$$
Recall that one can write $\mathcal{U}$ as $u_j\alpha_j$,
where $\alpha_j=\pi_{\ell \neq j}u_\ell e^{i\tilde{\varphi}_{\bf p}}$,
near each vortex point $p_j$. By (\ref{estimateofe21}) in Section \ref{sec3}, we have
\begin{equation*}
\begin{split}
&Re\int_{\R^2}\frac{E_{21}}{\alpha_j}{\nabla \bar{u}_j}dx\\
&=Re(i\ve |\ln \ve|)\int_{\R^2}\big[\frac{S'(r_j)}{S(r_j)}\frac{r_{j,2}}{r_j}+i\p_{x_2}\varphi_d\big]\Big(
\frac{S'(r_j)}{S(r_j)}\nabla r_j-i{\tau_j}\nabla \theta_j \Big)S^2(r_j)dx\\
&+o(\ve)\\
&=-\ve|\ln \ve|{\tau_j}\int_{\R^2}SS'(r)\Big( \p_{x_2}\varphi_d-{\tau_j}\frac{x_2}{r}\nabla \theta \Big)dx+o(\ve)\\
&=	-\ve|\ln \ve|{\tau_j}\int_{\R^2}SS'(r)\Big( \frac{x_1}{r^2}\nabla r-\frac{x_2}{r}\nabla \theta \Big)dx+o(\ve)\\
&=(-\pi\ve|\ln\ve|{\tau_j},\, 0)+o(\ve),
\end{split}	
\end{equation*}
where we have used the estimate (\ref{asymp3}).
On the other hand, by (\ref{estimateofe22}),
\begin{equation*}
\begin{split}
&Re\int_{\R^2}\frac{E_{22}}{\alpha_j}{\nabla \bar{u}}_jdx\\
&=\int_{\R^2}\Big(\sum_\ell\frac{1}{r_\ell^2}-|\nabla \varphi_d|^2+\frac{1}{x_1}\sum_\ell\frac{S'(r_\ell)}{S(r_\ell)}\p_{x_1}r_\ell   \Big)S(r_j)S'(r_j)\nabla r_j dx\\
&+2{\tau_j}\int \sum_\ell \frac{S'(r_\ell)}{S(r_\ell)}\nabla r_\ell\cdot \nabla \varphi_d \nabla \theta_j S^2(r_j)dx+o(\ve)\\
&=-\int_{\R^2}|\nabla \varphi_d|^2\nabla r_j S(r_j)S'(r_j) dx\\
&+\frac{1}{p_{j,1}}\int_{\R^2}(S'(r))^2\p_{x_1}r\nabla r dx\\
&+2{\tau_j}\int_{\R^2} \nabla r_j\cdot \nabla \varphi_d \nabla \theta_j S(r_j)S'(r_j)\, dx+o(\ve)\\
&=I_1+o(\ve).
\end{split}	
\end{equation*}


Recall the relation of $\varphi_d$ and $\psi$ in (\ref{relation1}), one has
\begin{equation*}
\nabla \varphi_d=\Big(\sum_{j=1}^{\m K}{\tau_j}\p_2 A_{p_j},\, \, -\sum_{j=1}^{\m K}{\tau_j}(\frac{A_{p_j}}{x_1}+\p_1 A_{p_j} )  \Big).	
\end{equation*}
It has been shown in \cite{Jackson} that
\begin{equation*}
A_a(x_1, x_2)=\sqrt{\frac{a_1}{x_1}}\frac{1}{\kappa}\Big[ (2-\kappa^2)K(\kappa^2)-2E(\kappa^2) \Big],	
\end{equation*}
where
\begin{equation*}
\kappa^2(x)=\frac{4a_1x_1}{x_1^2+a_1^2+(x_2-a_2)^2+2a_1x_1}	
\end{equation*}
and $K, \, E$ are the complete elliptic integrals of first and second kind, i.e.,
\begin{equation*}
\begin{split}
K(s)&=\int_{0}^{\frac{\pi}{2}}(1-s\sin^2\theta)^{-\frac{1}{2}}d\theta, 	\\
 E(s)&=\int_{0}^{\frac{\pi}{2}}(1-s\sin^2\theta)^{\frac{1}{2}}d\theta.
 \end{split}
\end{equation*}
They satisfy
\begin{equation*}
\begin{split}
K'(s)=K(1-s), \, \, E'(s)=E(1-s) \mbox{ for }1<s<1.
\end{split}	
\end{equation*}
Note that
 $
A_{\lambda a}(\lambda x)=A_a(x)	$,
and  for $s\to 1$,
\begin{equation*}
\begin{split}
K(s)&=-\frac{1}{2}\ln (1-s)(1+\frac{1-s}{4})+\ln 4+O(1-s),\\
E(s)&=1-\ln (1-s)\frac{1-s}{4}+O(1-s).	
\end{split}	
\end{equation*}
Moreover, as we mentioned before,  when $r=|z-a|=o(|a_1|)$, 	
\begin{equation*}
A_a(z)=\Big( \ln \frac{a_1}{r}+3\log(2)-2 \Big)+O\Big( \frac{r}{a_1}|\ln \frac{r}{a_1}| \Big)
\end{equation*}
and
\begin{equation*}
\p_r A_a=-\frac{1}{r}+O(\frac{1}{a_1}). 	
\end{equation*}
Combining all these, one has
\begin{equation*}
\begin{split}
&-\int_{\R^2}|\nabla \varphi_d|^2\nabla r_j S(r_j)S'(r_j) dx\\
&+2{\tau_j}\int_{\R^2} \nabla r_j\cdot \nabla \varphi_d \nabla \theta_j S(r_j)S'(r_j)\, dx\\
&=-2{\tau_j}\int_{\R^2}S(r_j)S'(r_j)\big(\nabla \theta_j\cdot\nabla \varphi_d \, \nabla r_j -\nabla r_j\cdot \nabla \varphi_d \, \nabla \theta_j \big)+o(\ve)\\
&=2{\tau_j}\Big(\frac{{\tau_j}}{p_{j,1}}\int \frac{S(r_j)S'(r_j)}{r_j}A_{p_j}(x)dx+\frac{\pi}{p_{j,1}}\sum_{\ell\neq j}{\tau_\ell} A_{p_\ell}(p_j), \, \, 0\Big)\\
&+2{\tau_j}\pi \sum_{\ell\neq j}{\tau_\ell}\nabla A_{p_\ell}(p_j)+o(\ve)\\
&=\frac{2\pi}{p_{j,1}}\Big(\ln p_{j,1}+c_0+ \sum_{\ell\neq j}{\tau_j\tau_\ell}A_{p_\ell}(p_j) , \, 0 \Big)+2\pi \sum_{\ell\neq j}{\tau_j\tau_\ell}\nabla A_{p_\ell}(p_j)+o(\ve)
\end{split}	
\end{equation*}
where
\begin{equation}\label{c0}
c_0=3\ln 2-2-\frac{1}{\pi}\int \frac{S(r)S'(r)\ln r}{r}dx.
\end{equation}
So one has
\begin{equation*}
I_1=\frac{2\pi}{p_{j,1}}\Big(\ln p_{j,1}+c_1+ \sum_{\ell\neq j}{\tau_j\tau_\ell}A_{p_\ell}(p_j) , \, 0 \Big)+2\pi \sum_{\ell\neq j}\tau_j\tau_\ell\nabla A_{p_\ell}(p_j)+o(\ve)	
\end{equation*}
where
\begin{equation*}
c_1=c_0+\frac{1}{2}\int_0^\infty S'(r)rdr.
\end{equation*}


By the above estimates,

\begin{equation*}
\begin{split}
&Re\int_{\R^2}\frac{E_2}{\alpha_j}\partial_{x_1}\bar{u}_j dx	\\
&=-\pi \Big[{\tau_j} \ve|\ln \ve|-\frac{2\ln p_{j,1}}{p_{j,1}}-\frac{2c_1}{p_{j,1}}-2\sum_{\ell\neq j}\tau_j\tau_\ell\frac{A_{p_\ell}(p_j)}{p_{j,1}}-2\sum_{\ell\neq j}\tau_j\tau_\ell\p_1 A_{p_\ell}(p_j)
\Big]\\
&+o(\ve)
\end{split}
\end{equation*}
and
\begin{equation*}
Re\int_{\R^2}\frac{E_2}{\alpha_j}\partial_{x_2}\bar{u}_j dx	=2\pi\sum_{\ell\neq j}\tau_j\tau_\ell\p_2 A_{p_\ell}(p_j)
  +o(\ve).
\end{equation*}

We now have the following reduced problem:

\begin{lemma}
The reduced problem (\ref{reduction}) is equivalent to the following system of the vortex points $\{p_j \}$:
\begin{equation}\label{reduced1}
\begin{split}
& {\tau_j}\ve|\ln \ve|-\frac{2\ln p_{j,1}}{p_{j,1}}-\frac{2c_1}{p_{j,1}}-2\sum_{\ell\neq j}\tau_j\tau_\ell\frac{A_{p_\ell}(p_j)}{p_{j,1}}-2\sum_{\ell\neq j}\tau_j\tau_\ell\p_1 A_{p_\ell}(p_j)
=o(\ve),
\end{split}
\end{equation}
and
\begin{equation}\label{reduced2}
2\sum_{\ell\neq j}\tau_j\tau_\ell\p_2 A_{p_\ell}(p_j) =o(\ve).
\end{equation}	
\end{lemma}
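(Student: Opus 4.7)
The plan is to verify the orthogonality condition (\ref{reduction}) term by term, showing that the linear contribution and the nonlinear contribution are both $o(\varepsilon)$, while the pairing against the error $E_2/\mathcal{U}$ produces the announced balance. I would organize the argument in three stages, corresponding to the three summands inside the bracket of (\ref{reduction}).

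First, for the linear piece I would exploit the identity $u_j \mathbb{L}(\eta) = L_0(\phi_j) + o(\rho^{-2})\phi_j$ with $\phi_j = u_j \eta$ recorded in the excerpt, where $L_0$ is the Ginzburg--Landau linearization around the standard degree-$\pm 1$ vortex. Integration by parts shifts $L_0$ onto $\nabla \bar u_j$, and translation invariance of (\ref{Landau}) gives $L_0(\nabla \bar u_j) = 0$. The residual term is controlled by $\rho^{-2}\|\eta\|_\ast$, which, under (\textbf{A1}) and the bound $\|\eta\|_\ast = O(\varepsilon^{1-\beta})$ from the previous section, is $o(\varepsilon)$. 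For the nonlinear piece I would use the explicit expression of $N(\eta)$: the genuinely quadratic pieces $|\nabla \eta|^2$ and $|\mathcal{U}|^2(e^{2\eta_1}-1-2\eta_1)$ contribute $O(\|\eta\|_\ast^2) = o(\varepsilon)$ for $\beta < 1/2$, the cutoff-derivative terms are supported where $r_j \sim 1$ and carry an additional factor of $\varepsilon |\ln\varepsilon|$ or $1/p_{j,1}$, and the $\gamma$-terms are cubic in $\eta$. All are absorbed into $o(\varepsilon)$.

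Second, and this is the crux of the proof, I would compute $\mathrm{Re}\int u_j (E_2/\mathcal{U}) \nabla \bar u_j \, dx$ by splitting $E_2 = E_{21} + E_{22}$ as in Section~\ref{sec3} and factoring $\mathcal{U} = u_j \alpha_j$ near $p_j$. For $E_{21}$, inserting formula (\ref{estimateofe21}) and performing the angular integration in polar coordinates centered at $p_j$ collapses the expression to $(-\pi \varepsilon |\ln\varepsilon|\tau_j, 0)$, the contribution of $\partial_{x_2}\varphi_d$ being evaluated at $p_j$ via (\ref{asymp3}). For $E_{22}$, I would expand $|\nabla \varphi_d|^2 = |\nabla \theta_j|^2 + 2\tau_j \nabla \theta_j \cdot \nabla \tilde\psi_j + |\nabla \tilde\psi_j|^2$ with $\tilde\psi_j := \varphi_d - \tau_j \theta_j$ smooth at $p_j$, and similarly separate the diagonal and off-diagonal parts of the $1/x_1$ term. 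The diagonal (self-interaction) integrand, combined with the $1/x_1$ contribution and the local expansion (\ref{asymp1})--(\ref{asymp3}) of $A_{p_j}$, produces the $(\ln p_{j,1} + c_1)/p_{j,1}$ contribution, with $c_1 = c_0 + \tfrac{1}{2}\int_0^\infty S'(r) r\, dr$ and $c_0$ given by (\ref{c0}). The off-diagonal cross terms, being regular at $p_j$ with radius of regularity $\rho$, are replaced by their values at $p_j$ via the mean value theorem modulo $o(\varepsilon)$ errors, producing the $A_{p_\ell}(p_j)$ and $\nabla A_{p_\ell}(p_j)$ contributions with the correct signs $\tau_j \tau_\ell$.

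Collecting the three stages, the orthogonality condition $c_j = 0$ becomes the two-component vector identity displayed just before the lemma, and setting the $x_1$-component and the $x_2$-component to zero yields (\ref{reduced1}) and (\ref{reduced2}) respectively. The main obstacle will be the bookkeeping in the $E_{22}$ calculation: isolating the genuine logarithmic self-interaction from the regular cross terms, correctly extracting the constant $3\ln 2 - 2$ from the asymptotics (\ref{asymp1}), and verifying that all error terms (near and far from the vortex, as well as across the cutoff transition region) are uniformly $o(\varepsilon)$ under the separation assumption (\textbf{A1}). Once the algebraic cancellations between the three contributions in $|\nabla \varphi_d|^2$ and the cross product $\nabla r_j \cdot \nabla \varphi_d \, \nabla \theta_j$ are organized, the identification of the reduced system is immediate.
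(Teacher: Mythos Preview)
Your proposal is correct and follows essentially the same approach as the paper: the paper likewise disposes of the $\mathbb{L}\eta$ and $N(\eta)$ contributions as $o(\varepsilon)$ via the identity $u_j\mathbb{L}(\eta)=L_0(\phi_j)+o(\rho^{-2})\phi_j$ and integration by parts, then splits $E_2=E_{21}+E_{22}$, writes $\mathcal{U}=u_j\alpha_j$, and extracts the $(-\pi\varepsilon|\ln\varepsilon|\tau_j,0)$ piece from $E_{21}$ and the $A_{p_\ell}$ interaction terms from $E_{22}$ exactly as you outline. Your description of the diagonal/off-diagonal decomposition of $|\nabla\varphi_d|^2$ is slightly more explicit than the paper's presentation but amounts to the same computation.
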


Using the scaling invariance
\begin{equation*}
A_{\lambda a}(\lambda x)=A_a(x),
\end{equation*}
if we denote by
\begin{equation*}
p_j=\frac{\tilde{p}_j}{\ve }, 	
\end{equation*}
where $|\tilde{p}_{j,1}|=O(1)$, we can get the reduced problem for $\tilde{p}_j$:

\begin{equation}\label{reduced11}
\begin{split}
& {\tau_j}|\ln \ve|+\frac{2 \ln\ve }{\tilde{p}_{j,1}}-\frac{2 \ln \tilde{p}_{j,1}}{\tilde{p}_{j,1}}
-\frac{2c_1}{\tilde{p}_{j,1}}-2\sum_{\ell\neq j}\tau_j\tau_\ell\frac{A_{\tilde{p}_\ell}(\tilde{p}_j)}{\tilde{p}_{j,1}}\\
&-2\sum_{\ell\neq j}\tau_j\tau_\ell\p_1 A_{\tilde{p}_\ell}(\tilde{p}_j)
=o(1),
\end{split}
\end{equation}
and
\begin{equation}\label{reduced22}
2\sum_{\ell\neq j}\tau_j\tau_\ell\p_2 A_{\tilde{p}_\ell}(\tilde{p}_j) =o(1).
\end{equation}
Using the asymptotic behavior of $A_a(x)$ and $A'_a(r)$, and recall that
\begin{equation*}
|\tilde{p}_\ell-\tilde{p}_j|\sim \frac{1}{\ln \ve }, \, |\tilde{p}_{j,1}|\sim O(1),	
\end{equation*}
we obtain the following  equivalent reduced problem:

\begin{equation}\label{finalreducedproblem}
\left\{\begin{array}{l}
 {\tau_j}|\ln \ve|+\frac{2 \ln\ve }{\tilde{p}_{j,1}}+2\sum_{\ell\neq j}\tau_j\tau_\ell\frac{\tilde{p}_{j,1}-\tilde{p}_{\ell,1}}{|\tilde{p}_j-\tilde{p}_\ell|^2}=o(\ln \ve),\\
2\sum_{\ell\neq j}\tau_j\tau_\ell\frac{\tilde{p}_{j,2}-\tilde{p}_{\ell,2}}{|\tilde{p}_j-\tilde{p}_\ell|^2} =o(1).	
\end{array}	
\right.
\end{equation}

\subsection{Vortex locations and their generating polynomials}

\label{sec6}

In this section, we construct a family of polynomials whose roots will
correspond to the locations of the vortex rings.

For each rescaled vortex point $\tilde{p}_{j},j=1,...,\mathcal{K},$ we have
associated a degree $\tau_{j}=\pm1.$ To analyze the reduced problem in a more
precise way, let us relabel those points with $\tau_{j}=1$ by $\tilde{p}%
_{1}^{+},...,\tilde{p}_{m}^{+}$ and those with $\tau=-1$ will be denoted by
$\tilde{p}_{1}^{-},...,\tilde{p}_{n}^{-}.$ We then write
\begin{align*}
\tilde{p}_{j}^{+}  & =\alpha_{0}+\alpha+\frac{1}{\left\vert \ln\varepsilon
	\right\vert }\mathbf{a}_{j},\text{ for }j=1,...,m,\\
\tilde{p}_{j}^{-}  & =\alpha_{0}+\alpha+\frac{1}{\left\vert \ln\varepsilon\right\vert
}\mathbf{b}_{j},\text{ for }j=1,...,n.
\end{align*}
Here $\alpha_{0}$ is a fixed constant only depends on $m,n,$ and
$\alpha=o\left(  1\right)  $ depends on $\varepsilon.$ Inserting these into
the reduced problem  (\ref{finalreducedproblem}), we find that, at the main order, $\left(  \mathbf{a}%
_{1},...,\mathbf{a}_{m},\mathbf{b}_{1},...,\mathbf{b}_{n}\right)  $ should
satisfy the following system:
\[
\left\{
\begin{array}
[c]{l}%
{\displaystyle\sum\limits_{j=1,j\neq k}^{m}}
\frac{1}{\mathbf{a}_{k}-\mathbf{a}_{j}}-%
{\displaystyle\sum\limits_{j=1}^{n}}
\frac{1}{\mathbf{a}_{k}-\mathbf{b}_{j}}=\frac{1}{2}-{\alpha_{0}}^{-1},\text{ for
}k=1,...,m,\\
-%
{\displaystyle\sum\limits_{j=1,j\neq k}^{n}}
\frac{1}{\mathbf{b}_{k}-\mathbf{b}_{j}}+%
{\displaystyle\sum\limits_{j=1}^{m}}
\frac{1}{\mathbf{b}_{k}-\mathbf{a}_{j}}=\frac{1}{2}+{\alpha_{0}}^{-1}, \text{ for
}k=1,...,n.
\end{array}
\right.
\]
This can be regarded as a balancing condition between the multiple vortex
rings. Adding together the $m+n$ equations in the balancing condition, we find
that a necessary condition for the existence of a balancing configuration is
$\left( {\alpha_{0}}^{-1}-\frac{1}{2}\right)  m+\left( {\alpha_{0}}^{-1}+\frac{1}%
{2}\right)  n=0.$ It follows that $\alpha_{0}=2\frac{m+n}{m-n}.$
Therefore, we are lead to consider the system
\begin{equation}
\left\{
\begin{array}
[c]{l}%
{\displaystyle\sum\limits_{j=1,j\neq k}^{m}}
\frac{1}{\mathbf{a}_{k}-\mathbf{a}_{j}}-%
{\displaystyle\sum\limits_{j=1}^{n}}
\frac{1}{\mathbf{a}_{k}-\mathbf{b}_{j}}=-n,\text{ for }k=1,...,m,\\
-%
{\displaystyle\sum\limits_{j=1,j\neq k}^{n}}
\frac{1}{\mathbf{b}_{k}-\mathbf{b}_{j}}+%
{\displaystyle\sum\limits_{j=1}^{m}}
\frac{1}{\mathbf{b}_{k}-\mathbf{a}_{j}}=-m, \text{ for }k=1,...,n.
\end{array}
\right.  \label{Balance}%
\end{equation}
To find solutions to this system, we define the generating polynomial as
\[
P\left(  x\right)  :=%
{\displaystyle\prod\limits_{j=1}^{m}}
\left(  x-\mathbf{a}_{j}\right)  ,\text{ \ }Q\left(  x\right)  :=%
{\displaystyle\prod\limits_{j=1}^{n}}
\left(  x-\mathbf{b}_{j}\right)  .
\]
If $\mathbf{a}_{j},\mathbf{b}_{j}$ satisfy $\left(  \ref{Balance}\right)  ,$
then
\begin{equation}
P^{\prime\prime}Q-2P^{\prime}Q^{\prime}+PQ^{\prime\prime}+nP^{\prime
}Q-mPQ^{\prime}=0.\label{PQ}%
\end{equation}

The case of $m=n$ has been studied in \cite{Liu-Wei} . In this case, the
system $\left(  \ref{Balance}\right)  $ is equivalent to
\[
\left\{
\begin{array}
[c]{l}%
{\displaystyle\sum\limits_{j=1,j\neq k}^{m}}
\frac{1}{\mathbf{a}_{k}-\mathbf{a}_{j}}-%
{\displaystyle\sum\limits_{j=1}^{n}}
\frac{1}{\mathbf{a}_{k}-\mathbf{b}_{j}}=-1,\text{ for }k=1,...,m,\\%
{\displaystyle\sum\limits_{j=1,j\neq k}^{n}}
\frac{1}{\mathbf{b}_{k}-\mathbf{b}_{j}}-%
{\displaystyle\sum\limits_{j=1}^{m}}
\frac{1}{\mathbf{b}_{k}-\mathbf{a}_{j}}=1, \text{ for }k=1,...,n.
\end{array}
\right.
\]
The polynomial solutions of this system are connected with theory of
integrable system. Indeed, letting $\phi=\frac{Q}{P}\exp\left(  x\right)  $
and $u=2\left(  \ln P\right)  ^{\prime\prime}.$ The equation $\left(
\ref{PQ}\right)  $ can be rewritten as
\[
\phi^{\prime\prime}+u\phi=\phi.
\]
This equation appears as the first equation in the Lax pair of the KdV
equation and has the Darboux invariance property. The polynomial solutions of
$\left(  \ref{PQ}\right)  $ in this case are given by the Adler-Moser polynomials.

From the view point of numerical computation, the equation $\left(
\ref{PQ}\right)  $ is indeed easier than $\left(  \ref{Balance}\right)  .$
Note that our construction of multiple vortex ring solutions requires that all
the points $\mathbf{a}_{j},j=1,...,m$ and $\mathbf{b}_{j},j=1,...,n$ are
distinct from each other. Therefore we require that the polynomials $P$, $Q$
satisfy the following condition:

\medskip

(H1) $P,Q$ have no repeated roots.

\medskip

Our construction also requires the following condition:

\medskip

(H2) The set of points $\left\{  \mathbf{a}_{1},\cdots,\mathbf{a}_{m},\text{
}\mathbf{b}_{1},\cdots,\mathbf{b}_{n}\right\}  $ are symmetric with respect to
the $x_{1}$ axis.

\medskip

Observe that equation $\left(  \ref{PQ}\right)  $ implies that if $X_{0}$ is a
common root of $P$ and $Q,$ then necessarily $X_{0}$ is a repeated root of $P$
or $Q.$

We observe that due to the translation invariance of the equation in the
balancing condition, we can normalize the polynomials $P,$ $Q$ as
\begin{align*}
P\left(  x\right)   &  =s_{1}+s_{2}x+...+s_{m-1}x^{m-1}+x^{m},\\
Q\left(  x\right)   &  =t_{1}+t_{2}x+...+t_{n-2}x^{n-2}+x^{n}.
\end{align*}
That is, the $x^{n-1}$ term in $Q$ can be chosen to be zero.
In this section, we would like to find some solution pair $(P,Q)$ using software such as Maple. Then in the next section, we shall use techniques of integrable system to find a sequence of solution pairs, with explicit Wronskian representation.

Let us consider the
case of $m+n\leq12.$ With this constraints, we find, using Maple, that there exist
polynomial solutions to $\left(  \ref{PQ}\right)  $ satisfying (H1) and whose
roots satisfy (H2), if further $\left(  m,n\right)  $ are one of the cases in
the set
\[
S:=\{\left(  2,1\right)  ,\left(  3,2\right)  ,\left(  4,3\right)  ,\left(
5,4\right)  ,\left(  6,5\right)  \}.
\]
Indeed, if $\left(  m,n\right)  =\left(  2,1\right)  ,$ then $\left(
\ref{PQ}\right)  $ has a solution of the form
\[
P\left(  x\right)  =x^{2}-2x+2,\text{ }Q\left(  x\right)  =x.
\]
If $\left(  m,n\right)  =\left(  3,2\right)  ,$ then $\left(  \ref{PQ}\right)
$ has solution:
\[
P\left(  x\right)  =x^{3}-2x^{2}+\frac{7}{2}x-\frac{3}{2},\text{ }Q\left(
x\right)  =x^{2}+1.
\]
If $\left(  m,n\right)  =\left(  4,3\right)  ,$ then $\left(  \ref{PQ}\right)
$ has solution:
\begin{align*}
P\left(  x\right)   &  =x^{4}-2x^{3}+\frac{44}{9}x^{2}-\frac{89}{27}%
x+\frac{533}{324},\\
\text{ }Q\left(  x\right)   &  =x^{3}+\frac{13}{6}x+\frac{13}{54}.
\end{align*}
If $\left(  m,n\right)  =\left(  5,4\right)  ,$ then $\left(  \ref{PQ}\right)
$ has solution:
\begin{align*}
P\left(  x\right)   &  =x^{5}-2x^{4}+\frac{449}{72}x^{3}-\frac{749}{144}%
x^{2}+\frac{12919}{2592}x-\frac{16015}{15552},\\
\text{ }Q\left(  x\right)   &  =x^{4}+\frac{61}{18}x^{2}+\frac{16}{27}%
x+\frac{1337}{1296}.
\end{align*}
When $\left(  m,n\right)  =\left(  6,5\right)  ,$ we have
\begin{align*}
P\left(  x\right)   &  =x^{6}-2x^{5}+\frac{2269}{300}x^{4}-\frac
{193279}{27000}x^{3}+\frac{10810499}{1080000}x^{2}-\frac{57115601}%
{16200000}x+\frac{3980046413}{2916000000},\\
Q\left(  x\right)   &  =x^{5}+\frac{1669}{360}x^{3}+\frac{3607}{3600}%
x^{2}+\frac{1112099}{324000}x+\frac{23805769}{48600000}.
\end{align*}
The roots of $P,Q$ listed above are solutions of the balancing system. Here we
list them in the order $\mathbf{a}_{1},...,\mathbf{a}_{m},\mathbf{b}%
_{1},...,\mathbf{b}_{n}$ and denote it by $\mathcal{P}_{\left(  m,n\right)
}.$

The numerical value can be listed as below:%

\begin{align*}
\mathcal{P}_{\left(  2,1\right)  }  &  :\left(  1+i,1-i,0\right)  ,\\
\mathcal{P}_{\left(  3,2\right)  }  &  :\left(
0.56,0.72+1.48i,0.72-1.48i,i,-i\right)  ,\\
\mathcal{P}_{\left(  4,3\right)  }  &  :(
0.393-0.57i,0.393+0.57i,0.607-1.76i,0.607+1.76i,\\
&  -0.11,0.055-1.48i,0.055+1.48i),
\end{align*}%

\begin{align*}
\mathcal{P}_{\left(  5,4\right)  }  &
:(0.255,0.322-0.938i,0.322+0.938i,0.55-1.948i,0.55+1.948i,\\
&  -0.107-0.567i,-0.107+0.567i,0.107-1.758i,0.107+1.758i),
\end{align*}%
\begin{align*}
\mathcal{P}_{\left(  6,5\right)  }  &
:(0.191-0.395i,0.191+0.395i,0.29-1.2i,0.29+1.2i,0.52-2.09i,\\
&  0.52+2.09i,-0.145,-0.078-0.94i,-0.078+0.94i,0.15-1.95i,0.15+1.95i).
\end{align*}

\begin{figure}[ptb]
	\caption{$(m,n)=(2,1)$}
	\centering
	\includegraphics[
	height=9in,
	width=7in
	]{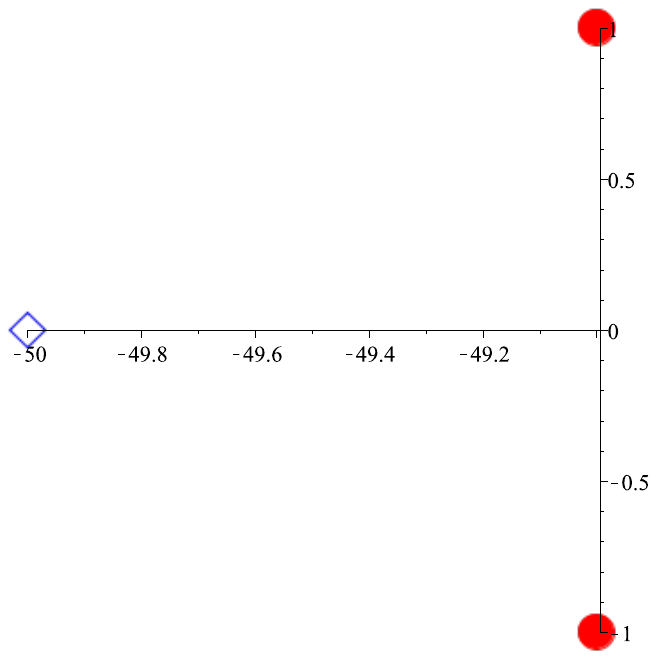}
\end{figure}

\begin{figure}[ptb]
	\caption{$(m,n)=(4,3)$}%
	\centering
	\includegraphics[
	height=9in,
	width=7in
	]{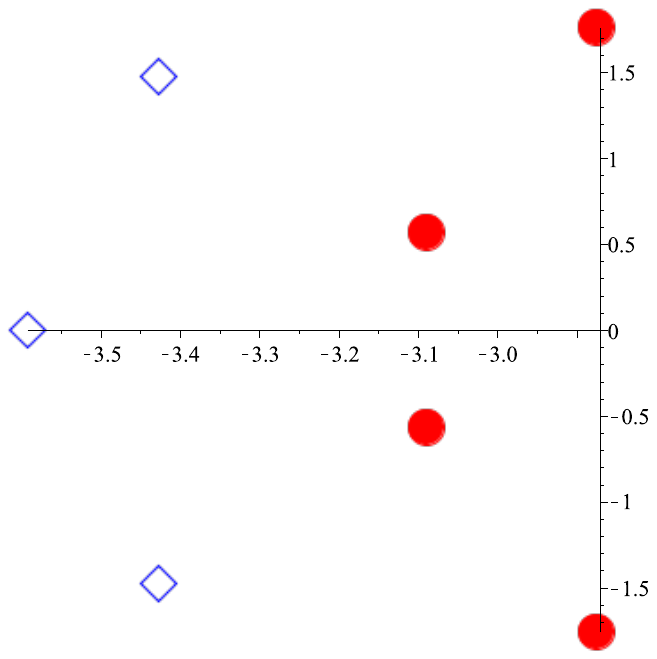}\end{figure}
\begin{figure}[ptb]
	\caption{$(m,n)=(6,5)$}%
	\centering
	\includegraphics[
	height=9in,
	width=7in
	]{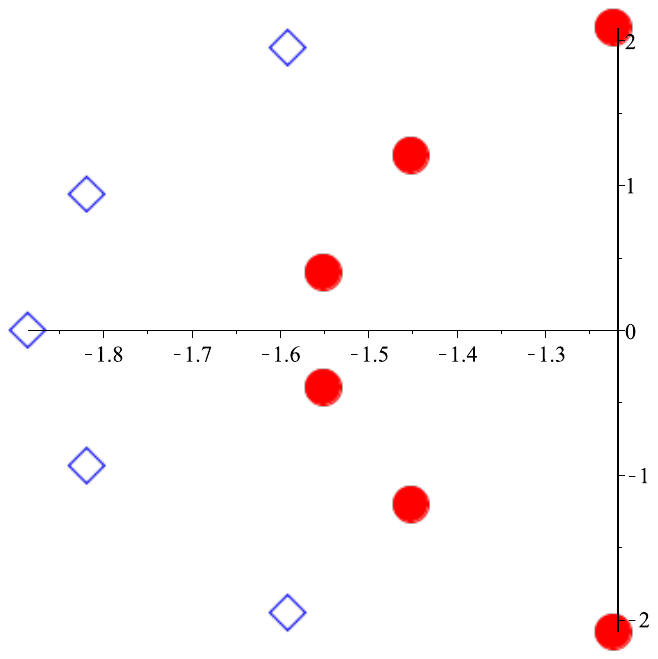}\end{figure}

 Let us denote the pair $(P,Q)$ for $(m,n)=(j,j-1)$ as $(P_j,Q_j)$. Then for the above examples, we can see that $P_j$ is simply a translation in the $x$ variable of $Q_{j+1}$. We will see in the next section that this is true for all $m=n+1$.

Next let us consider the linearized operator around the solution. Let us
denote the left hand side of the $j$-th equation of (\ref{Balance}) by
$F_{j}.$ Then we can compute the linearization $dF$ of the map
\[\label{mapF}
F:\left(  \mathbf{a}_{1},...,\mathbf{a}_{m},\mathbf{b}_{1},...,\mathbf{b}%
_{n}\right)  \rightarrow\left(  F_{1},...,F_{m+n}\right)  .
\]
$dF$ evaluated at the point $\mathcal{P}_{\left(  m,n\right)  }$ is a matrix,
which can be explicitly computed. The solvability of our original reduced
problem is closely related to the nondegeneracy of $dF.$ Since any translation
of the $\left(  \mathbf{a},\mathbf{b}\right)  $ is still a solution to the
balancing system, necessarily the determinant of this matrix is zero. That is,
$0$ is an eigenvalue of $dF$. Observe that $\left(  1,1,....,1\right)  $ is an
eigenvector. We call $\left(  \mathbf{a}_{1},...,\mathbf{a}_{m},\mathbf{b}%
_{1},...,\mathbf{b}_{n}\right)  $ nondegenerated, if the kernel of $dF$ is one
dimensional. One can check by explicit computations that for the solutions
$\mathcal{P}_{\left(  m,n\right)  }$ listed above, they are all
nondegenerated.

It is worth pointing out that if $\left(  m,n\right)  $ is not in $S,$ there
may still have polynomials $P,Q$ satisfying $\left(  \ref{PQ}\right)  ,$ but
with repeated roots. For instance, when $\left(  m,n\right)  =\left(
4,1\right)  ,$ it has a solution with
\[
P\left(  x\right)  =x^{4}+4x^{3},\text{ \ }Q\left(  x\right)  =x.
\]
When $\left(  m,n\right)  =\left(  5,3\right)  ,$ it has a solution with
\[
P\left(  x\right)  =x^{5}-\frac{4}{3}x^{4}+\frac{4}{3}x^{3}-\frac{8}{9}%
x^{2}+\frac{8}{27}x,\text{ }Q\left(  x\right)  =x^{3}.
\]

A given pair $\left(  m,n\right)  $ can be used in the construction of multiple
vortex rings, if there exist polynomial solutions to $\left(  \ref{PQ}\right)
$ satisfying (H1) and (H2). In this respect, there are many questions remain
to be answered. For instances, are there infinitely many such pairs? If
$\left(  m,n\right)  $ is such a pair, is it necessarily that $m=n+1?$ Is the
balancing configuration unique up to translation? These questions will be partially answered in the next section.

Now let us come back to our original reduced problem (\ref{finalreducedproblem}) of the GP equation. For
each $\left(  m,n\right)  \in S.$ We have a special solution $\left(  \mathbf{a}%
_{1}^{0},...,\mathbf{a}_{m}^{0},\mathbf{b}_{1}^{0},...,\mathbf{b}_{n}%
^{0}\right)  $ given by $\mathcal{P}_{\left(  m,n\right)  }.$ If we define vector
$\beta$ by
\begin{align*}
\mathbf{a}_{j}  & =\mathbf{a}_{j}^{0}+\beta_{j},j=1,...,m,\\
\mathbf{b}_{j}  & =\mathbf{b}_{j}^{0}+\beta_{j+m},j=1,...,n,
\end{align*}
then the reduced problem (\ref{finalreducedproblem}) takes the form
\begin{equation}
dF\left(  \beta\right)  =G\left(  \alpha,\beta\right)  +\alpha \alpha_0^{-2}\mathbf{e}%
_{1}\mathbf{,}\label{redu}%
\end{equation}
where $G\left(  \alpha,\beta\right)  =o\left(  1\right)  $ as $\varepsilon
\rightarrow0,$ with higher order dependence on $\alpha,\beta,$ and
$\mathbf{e}_{1}$ is a $m+n$ dimensional column vector whose first $m$ entries
are  equal to $-1$ and the last $n$ entries are all equal to $1.$ Note that
$dF$ is in general not a symmetric matrix. However, since $dF$ is
nondegenerated, the kernel of $\left(  dF\right)  ^{T}$ is spanned by
$\mathbf{e}_{2}:=\left(  1,...,1\right)  .$ Using the fact that $m-n=1,$ \ we
find that the projection of the right hand side of $\left(  \ref{redu}\right)
$ onto $\mathbf{e}_{2}$ is equal to $G\cdot\mathbf{e}_{2}-\alpha\alpha_0^{-2}.$  Now let us consider the projected
problem
\begin{equation}
dF\left(  \beta\right)  =G\left(  \alpha,\beta\right)  +\alpha \alpha_0^{-2}\mathbf{e}%
_{1}-\frac{G\cdot\mathbf{e}_{2}-\alpha\alpha_0^{-2}}{m+n}\mathbf{e}_{2}.\label{pro}%
\end{equation}
Note that for each fixed small $\alpha,$ using the nondegeneracy of the solution $\left(
\mathbf{a}_{1}^{0},...,\mathbf{a}_{m}^{0},\mathbf{b}_{1}^{0},...,\mathbf{b}%
_{n}^{0}\right)  ,$ the projected system $\left(  \ref{pro}\right)  $ can be
solved and a solution $\beta$ depending on $\alpha.$ With this $\beta,$ we
then can solve the equation $G\cdot\mathbf{e}_{2}-\alpha\alpha_0^{-2}=0$ by a contraction
mapping argument. Hence the reduced problem $\left(  \ref{redu}\right)  $ can
be finally solved. Once this is done, with the help of linear theory of
Section 4, arguments similar as that of \cite{Liu-Wei} yield a solution to the
GP equation, satisfying the conclusion of Theorem \ref{thm1} .

\section{Recurrence relations and Wronskian representation of the generating
	polynomials}

In this section, we show that the generating polynomials of the balancing
system discussed in the previous section have  recurrence relations in the
case of $m=n+1$, and can be explicitly written down using certain Wronskians. The
main result of this section is the following

\begin{theorem}
	\label{Theor}There exists a sequence of polynomials $\mathcal{P}_{n},n=1,...,$
	such that
	\begin{equation}
	\mathcal{P}_{n+1}^{\prime\prime}\mathcal{P}_{n}-2\mathcal{P}_{n+1}^{\prime
	}\mathcal{P}_{n}^{\prime}+\mathcal{P}_{n+1}^{\prime\prime}\mathcal{P}%
	_{n}+n\mathcal{P}_{n+1}^{\prime}\mathcal{P}_{n}-\left(  n+1\right)
	\mathcal{P}_{n+1}\mathcal{P}_{n}^{\prime}=0,\label{e2}%
	\end{equation}
	where $\mathcal{P}_{n}$ is of degree $n$, $\mathcal{P}_{1}=x$ and
	$\mathcal{P}_{2}=x^{2}-2x+2.$ Moreover, up to a constant factor(see $\left(
	\ref{cons}\right)  $), these polynomials can be written as
	\[
	\exp\left(  -\frac{n\left(  n-1\right)  x}{2}\right)  W\left(  \omega
	_{1},...,\omega_{n}\right)  ,
	\]
	where $W$ represents the Wronskian, $\omega_{j}=\left(  x-a_{j}\right)
	\exp\left(  \left(  j-1\right)  x\right)  ,$ and $a_{1}=0,$ $a_{j+1}%
	=a_{j}+\frac{2}{j}.$
\end{theorem}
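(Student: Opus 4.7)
The plan is to define $\mathcal{P}_n$ by the Wronskian formula and then verify the recurrence $(\ref{e2})$ by combining the explicit polynomial structure of the Wronskian with a Darboux--Crum bilinear identity, in the spirit of the Adler-Moser construction.

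\textbf{Step 1 (Polynomial form, initial data).} Since $\omega_j=(x-a_j)e^{(j-1)x}$, every entry of the $j$th column of the Wronski matrix carries the common factor $e^{(j-1)x}$. Pulling these factors out column-by-column gives
\[
W(\omega_1,\ldots,\omega_n)=e^{n(n-1)x/2}\,\widetilde{\mathcal P}_n(x),
\]
with $\widetilde{\mathcal P}_n$ a polynomial; one checks by inspection of the leading column entries that $\deg\widetilde{\mathcal P}_n = n$ and the top coefficient is nonzero (it comes from a Vandermonde-like structure in $(j-1)$). Normalizing so that $\mathcal{P}_n$ is monic fixes the constant referenced in $(\ref{cons})$. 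Direct computation with $a_1=0$, $a_2=2$ gives $W(\omega_1)=x$ and $W(\omega_1,\omega_2)=(x^2-2x+2)e^x$, so $\mathcal{P}_1=x$ and $\mathcal{P}_2=x^2-2x+2$, matching the claimed initial data.

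\textbf{Step 2 (Darboux--Crum interpretation).} Each $\omega_j$ lies in the kernel of $(\partial-(j-1))^2$, i.e.\ it is a Jordan-block generalized eigenfunction of $-\partial^2$ at eigenvalue $(j-1)^2$. In this setup the Wronskian $W_n$ is the $n$th $\tau$-function of an iterated Crum chain applied to the trivial potential, and the transformed potential $u_n=-2(\ln W_n)''=-2(\ln\mathcal{P}_n)''$ (the exponential prefactor being killed by two derivatives of the log). The Crum quotient $W_{n+1}/W_n$ then satisfies a canonical linear second-order ODE whose coefficients are expressed in terms of $u_n$ and the spectral value $(n)^2$ attached to $\omega_{n+1}$.

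\textbf{Step 3 (From the Crum quotient to $(\ref{e2})$).} Writing $R_n:=\mathcal{P}_{n+1}/\mathcal{P}_n$ and using Step~1, one has $R_n=\kappa_n\,e^{-nx}\,W_{n+1}/W_n$, where the shift $e^{-nx}$ comes from $\beta_{n+1}-\beta_n=n$ with $\beta_n=n(n-1)/2$. Substituting into the Step~2 ODE for $W_{n+1}/W_n$ and multiplying through by $\mathcal{P}_n^2$ produces exactly $(\ref{e2})$; the ``drift'' terms $n\mathcal{P}_{n+1}'\mathcal{P}_n-(n+1)\mathcal{P}_{n+1}\mathcal{P}_n'$ are precisely the contribution of the shift $e^{-nx}$. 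The prescription $a_{j+1}-a_j=2/j$ is not cosmetic: it is the exact resonance shift required at the $(j{+}1)$th Darboux step so that the new seed $\omega_{j+1}$ is compatible with the Jordan-block structure and produces a polynomial (rather than rational-exponential) update of $W_j$. It is the direct analogue of the Adler-Moser integration constants.

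\textbf{Main obstacle.} The delicate point is the generalized Darboux step in Step~2, because the standard Crum machinery is phrased for genuine eigenfunctions while our $\omega_j$ are only Jordan-block eigenfunctions. Two routes are available. The conceptually cleaner one is to replace each Jordan block by a pair of genuine eigenfunctions at spectral parameters $(j-1)^2\pm\varepsilon$, apply the standard Crum formulas to the perturbed family, and then take the coalescence limit $\varepsilon\to 0$; the shift $2/j$ emerges naturally in this limit. A more elementary route is to invoke the Jacobi/Desnanot--Sylvester identity for bordered Wronskians to get a bilinear relation among $W_{n-1}$, $W_n$, $W_{n+1}$ and their first derivatives, and then reduce it to $(\ref{e2})$ by a patient but direct computation, using the explicit form of $\omega_j$ and induction on $n$. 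Either route yields the theorem, with the Wronskian formula and recurrence established simultaneously.
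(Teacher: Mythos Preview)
Your plan is pointed in the right direction, but Steps~2--3 are too schematic and skip over the place where the proof actually happens. Two concrete points:

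\medskip
\textbf{(i) The role of $a_{j+1}-a_j=2/j$ is not ``resonance'' but an explicit differential identity.} The one-line observation that drives the whole argument is
\[
\omega_{j+1}''\;=\;j^{2}\,e^{x}\,\omega_j,
\]
which is \emph{equivalent} to $a_{j+1}-a_j=2/j$. In the paper this is combined with the bordered-Wronskian Jacobi identity, applied with the auxiliary function $\xi\equiv 1$, to produce the closed formula $W_k(1)=(-1)^k((k-1)!)^2 e^{(k-1)x}W_{k-1}$ and hence the \emph{three-term} bilinear recurrence
\[
\mathcal P_n'\mathcal P_{n+2}-\mathcal P_n\mathcal P_{n+2}'-(n+1)\mathcal P_n\mathcal P_{n+2}+(n+1)\mathcal P_{n+1}^2=0.
\]
Your Route~(b) gestures at the Jacobi identity but never singles out either the identity $\omega_{j+1}''=j^2e^x\omega_j$ or the choice $\xi=1$; without these the computation does not close up.

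\medskip
\textbf{(ii) The bilinear three-term recurrence is the right intermediate object, not $(\ref{e2})$ directly.} Your Steps~2--3 try to read off $(\ref{e2})$ from a Schr\"odinger-type equation for $W_{n+1}/W_n$ after an exponential shift. But the equation that $\phi=\mathcal P_n/\mathcal P_{n+1}$ actually satisfies,
\[
\phi''+\bigl(2(\ln\mathcal P_{n+1})''-(\ln\mathcal P_{n+1})'\bigr)\phi-(n+1)\phi'=0,
\]
is \emph{not} of standard Schr\"odinger form: it carries both a first-order term and a non-spectral zero-order piece $-(\ln\mathcal P_{n+1})'$. The paper therefore does not appeal to the usual Crum machinery at all; it uses Matveev's generalized Darboux transformation for equations $u_2\phi''+u_1\phi'+u_0\phi=0$, applied with $u_2=e^{-x}$, to show \emph{inductively} that the three-term recurrence above propagates $(\ref{e2})$ from level $n$ to level $n+1$. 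Your claim that ``substituting into the Step~2 ODE and multiplying by $\mathcal P_n^2$ produces exactly $(\ref{e2})$'' is precisely the step that requires this generalized Darboux calculation, and it is not supplied.

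\medskip
In short: the paper's proof is (a) Jacobi identity $+$ the relation $\omega_{j+1}''=j^2e^x\omega_j$ $\Rightarrow$ three-term recurrence for the Wronskians; (b) generalized Darboux (Matveev) $\Rightarrow$ three-term recurrence implies $(\ref{e2})$ by induction. Your outline circles both ingredients but does not isolate them, and the limiting/coalescence Route~(a) you propose would reintroduce the same missing verification in a less transparent form.
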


These polynomials can be regarded as a generalization of the Adler-Moser
polynomials. There are other types of generalization of the Adler-Moser
polynomials, see, for instance \cite{Lou}. We also refer to \cite{Aref2,Aref3, Cla, Liu-Wei} and the references cited therein for more discussion in this direction.

Recall that in the previous section, we derived the equation
\begin{equation}
P^{\prime\prime}Q-2P^{\prime}Q^{\prime}+PQ^{\prime\prime}+nP^{\prime}Q-\left(
n+1\right)  PQ^{\prime}=0. \label{e1}%
\end{equation}
For $n=1,$ we have found that $P\left(  x\right)  =x^{2}-2x+2,$ $Q\left(
x\right)  =x$ is a solution. To solve this equation for general $n,$ we define
$\phi=\frac{Q}{P}.$ Direction computation shows that the equation $\left(
\ref{e1}\right)  $ can be written as
\begin{equation}
\phi^{\prime\prime}+\left(  2\left(  \ln P\right)  ^{\prime\prime}-\left(  \ln
P\right)  ^{\prime}\right)  \phi-\left(  n+1\right)  \phi^{\prime}=0.
\label{e3}%
\end{equation}
Note that the equation in this form is different from the one considered by
Adler-Moser, in the sense that we have two additional terms corresponding to
$\left(  \ln P\right)  ^{\prime}$ and $\phi^{\prime}.$ Moreover, equation (\ref{e1}) is not of the standard Hirota bilinear form. This significantly
complicates the analysis.

For $n=1,$ we already know that equation $\left(  \ref{e3}\right)  $ has the
solution
\[
\bar{\phi}\left(  x\right)  :=\frac{Q}{P}=\frac{x}{x^{2}-2x+2}.
\]
It is worth pointing out, although not necessarily relevant to our later
analysis, $\bar{\phi}$ is smooth in the whole line. Equation $\left(
\ref{e3}\right)  $ is a second order ODE, it has another solution linearly
independent with $\bar{\phi}.$ One can check that $\phi^{\ast}$ defined below
is such a solution. Explicitly,
\[
\phi^{\ast}\left(  x\right)  :=\frac{2x^{3}-10x^{2}+21x-16}{x^{2}-2x+2}%
\exp\left(  2x\right)  .
\]
Note that $\phi^{\ast}$ can also be written as%
\[
\phi^{\ast}=\left(  \int_{-\infty}^{x}\frac{\exp\left[  \left(  n+1\right)
	s\right]  }{\bar{\phi}^{2}}ds\right)  \bar{\phi}\left(  x\right)  .
\]

Next we discuss the generalized Darboux transformation adapted to equation
(\ref{e3}). The following result can be found in the last section of \cite{Mat}.

\begin{lemma}
	Suppose $\phi=\phi_{1}$ and $\phi=\phi_{2}$ are two solutions of the equation
	\[
	u_{2}\phi^{\prime\prime}+u_{1}\phi^{\prime}+u_{0}\phi=0.
	\]
	Then the functions $\tilde{\phi}:=\phi_{2}^{\prime}-\frac{\phi_{1}^{\prime
		}\phi_{2}}{\phi_{1}}$ satisfies
	\[
	\tilde{u}_{2}\tilde{\phi}^{\prime\prime}+\tilde{u}_{1}\tilde{\phi}^{\prime
	}+\tilde{u}_{0}\tilde{\phi}=0,
	\]
	where $\tilde{u}_{2}=u_{2},\tilde{u}_{1}=u_{1}+u_{2}^{\prime},\tilde{u}%
	_{0}=u_{0}+u_{1}^{\prime}+2u_{2}\left(  \ln\phi_{1}\right)  ^{\prime\prime
	}+u_{2}^{\prime}\left(  \ln\phi_{1}\right)  ^{\prime}.$
\end{lemma}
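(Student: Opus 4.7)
The plan is to prove the lemma via the operator factorization (Darboux intertwining) relation, rather than by substituting $\tilde{\phi}$ and grinding through derivatives, since the operator viewpoint organizes the bookkeeping cleanly and makes transparent where each hypothesis enters. Let $L := u_{2}\partial^{2}+u_{1}\partial+u_{0}$ and $\tilde{L}:=\tilde{u}_{2}\partial^{2}+\tilde{u}_{1}\partial+\tilde{u}_{0}$, set $v := \ln\phi_{1}$, and introduce the first-order operator $D := \partial-v'$. By construction $\tilde{\phi}=\phi_{2}'-v'\phi_{2}=D\phi_{2}$, and $D\phi_{1}=0$. The central claim is the operator identity
\begin{equation*}
\tilde{L}\,D \;=\; (\partial-v')\,L.
\end{equation*}
Once this is established, for any $\phi_{2}$ with $L\phi_{2}=0$ we immediately get $\tilde{L}\tilde{\phi}=\tilde{L}D\phi_{2}=(\partial-v')(L\phi_{2})=0$, which is exactly the conclusion of the lemma.

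To verify the operator identity, I would expand both sides as third-order linear differential operators using the composition rules $\partial\circ g=g'+g\partial$ and $\partial^{2}\circ g=g''+2g'\partial+g\partial^{2}$ for a multiplication operator $g$. A direct calculation yields
\begin{align*}
\tilde{L}\,D &= \tilde{u}_{2}\partial^{3} + (\tilde{u}_{1}-\tilde{u}_{2}v')\partial^{2} + (\tilde{u}_{0}-2\tilde{u}_{2}v''-\tilde{u}_{1}v')\partial - (\tilde{u}_{2}v'''+\tilde{u}_{1}v''+\tilde{u}_{0}v'),\\
(\partial-v')L &= u_{2}\partial^{3} + (u_{2}'+u_{1}-u_{2}v')\partial^{2} + (u_{1}'+u_{0}-u_{1}v')\partial + (u_{0}'-u_{0}v').
\end{align*}
Matching the coefficients of $\partial^{3}$, $\partial^{2}$, and $\partial$ gives precisely $\tilde{u}_{2}=u_{2}$, $\tilde{u}_{1}=u_{1}+u_{2}'$, and $\tilde{u}_{0}=u_{0}+u_{1}'+2u_{2}v''+u_{2}'v'$, which are exactly the coefficient formulas prescribed in the lemma.

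Finally, one must verify that the constant terms also match, which amounts to the scalar identity
\begin{equation*}
u_{0}' \;=\; -u_{2}v''' - (u_{1}+u_{2}')v'' - u_{1}'v' - 2u_{2}v''v' - u_{2}'(v')^{2}.
\end{equation*}
This is precisely the derivative of
\begin{equation*}
u_{0} \;=\; -u_{2}v'' - u_{2}(v')^{2} - u_{1}v',
\end{equation*}
which is obtained by dividing $L\phi_{1}=0$ by $\phi_{1}$ and using $\phi_{1}''/\phi_{1}=v''+(v')^{2}$. Thus the intertwining relation holds, and the lemma follows. The only real "obstacle" is the algebraic bookkeeping in the two operator expansions; the hypothesis $L\phi_{1}=0$ enters the argument exactly once, at the matching of the constant coefficients, which is the Darboux miracle in this slightly generalized (non-self-adjoint, non-normalized) setting. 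Note that both hypotheses of the lemma are used: $L\phi_{1}=0$ for the operator identity itself, and $L\phi_{2}=0$ when applying the identity to $\phi_{2}$.
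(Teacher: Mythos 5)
Your proof is correct. I checked the two operator expansions: writing $L=u_2\partial^2+u_1\partial+u_0$, $D=\partial-v'$ with $v=\ln\phi_1$, one indeed gets
\begin{equation*}
(\partial-v')Lf=u_2f'''+(u_2'+u_1-u_2v')f''+(u_1'+u_0-u_1v')f'+(u_0'-u_0v')f,
\end{equation*}
and the corresponding expansion of $\tilde{L}Df$, so matching the $\partial^3,\partial^2,\partial$ coefficients forces exactly the stated formulas for $\tilde u_2,\tilde u_1,\tilde u_0$, while the zeroth-order coefficients agree precisely because $u_0=-u_2v''-u_2(v')^2-u_1v'$ (which is $L\phi_1=0$ divided by $\phi_1$) can be differentiated to give the required scalar identity. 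A comparison with the paper is not really possible here: the authors do not prove this lemma at all, but simply quote it from the last section of Matveev's paper on Darboux transformations. Your intertwining argument $\tilde{L}D=(\partial-v')L$ is the standard and arguably cleanest route — it isolates the single place where $L\phi_1=0$ is used and immediately yields the conclusion for any second solution $\phi_2$ — so it serves as a legitimate self-contained substitute for the external citation. One small remark: the identity $\tilde{L}D=(\partial-v')L$ holds as an identity of differential operators only on the locus where $\phi_1\neq 0$ (so that $v=\ln\phi_1$ and its derivatives make sense), which is the same implicit restriction under which the lemma's statement of $\tilde\phi$ and $\tilde u_0$ is meaningful, so nothing is lost.
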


To apply this lemma, we write equation $\left(  \ref{e3}\right)  $ as
\[
e^{-x}\phi^{\prime\prime}+e^{-x}\left(  2\left(  \ln P\right)  ^{\prime\prime
}-\left(  \ln P\right)  ^{\prime}\right)  \phi-e^{-x}\left(  n+1\right)
\phi^{\prime}=0.
\]
Let us define the new potential
\begin{align*}
\tilde{u}_{0} &  :=e^{-x}\left(  2\left(  \ln P\right)  ^{\prime\prime
}-\left(  \ln P\right)  ^{\prime}\right)  +\left(  n+1\right)  e^{-x}%
+2e^{-x}\left(  \ln\phi^{\ast}\right)  ^{\prime\prime}-e^{-x}\left(  \ln
\phi^{\ast}\right)  ^{\prime}.\\
\tilde{u}_{1} &  =-e^{-x}\left(  n+1\right)  -e^{-x},
\end{align*}
and the new function
\[
\Phi_{1}:=\bar{\phi}^{\prime}-\frac{\phi^{\ast\prime}\bar{\phi}}{\phi^{\ast}%
}=\frac{W\left(  \phi^{\ast},\bar{\phi}\right)  }{\phi^{\ast}}=\frac{e^{2x}%
}{\phi^{\ast}}.
\]
Then using the generalized Darboux transformation described in the previous lemma, we have
\[
e^{-x}\Phi_{1}^{\prime\prime}+\tilde{u}_{0}\Phi_{1}+\tilde{u}_{1}\Phi
_{1}^{\prime}=0.
\]
That is,
\begin{equation}
e^{-x}\Phi_{1}^{\prime\prime}+e^{-x}\left(  2\left(  \ln P_{3}\right)
^{\prime\prime}-\left(  \ln P_{3}\right)  ^{\prime}\right)  \Phi_{1}-\left(
n+2\right)  e^{-x}\Phi_{1}^{\prime}=0,\label{eqp3}%
\end{equation}
where the polynomial $P_{3}$ is defined by
\[
P_{3}:=P\phi^{\ast}e^{-2x}=2x^{3}-10x^{2}+21x-16.
\]
Equation $\left(  \ref{eqp3}\right)  $ precisely has the form $\left(
\ref{e3}\right)  .$ An important property is that the equation $\left(
\ref{eqp3}\right)  $ has another solution
\[
\Phi_{1}^{\ast}:=\frac{36x^{4}-312x^{3}+1136x^{2}-1972x+1357}{2x^{3}%
	-10x^{2}+21x-16}e^{3x}.
\]

The computations tell us that if $\mathcal{P}_{n}$ is a sequence of
polynomials satisfies the conclusion of Theorem \ref{Theor}, then we expect
the equation
\[
\phi^{\prime\prime}+\left(  2\left(  \ln\mathcal{P}_{n+1}\right)
^{\prime\prime}-\left(  \ln\mathcal{P}_{n+1}\right)  ^{\prime}\right)
\phi-\left(  n+1\right)  \phi^{\prime}=0,
\]
has two linearly independent solutions, of the form
\[
\phi_{1}=\frac{\mathcal{P}_{n}}{\mathcal{P}_{n+1}},\phi_{2}=\frac
{\mathcal{P}_{n+2}}{\mathcal{P}_{n+1}}e^{\left(  n+1\right)  x}.
\]
The Wronskian $W\left(  \phi_{1},\phi_{2}\right)  $ should be equal to
$ce^{\left(  n+1\right)  x}$ for some constant $c.$ Hence we get the following
recursive relations between $\mathcal{P}_{n},\mathcal{P}_{n+1},\mathcal{P}%
_{n+2}:$%
\[
\left(  \frac{\mathcal{P}_{n}}{\mathcal{P}_{n+1}}\right)  ^{\prime}%
\frac{\mathcal{P}_{n+2}}{\mathcal{P}_{n+1}}e^{\left(  n+1\right)  x}-\left(
\frac{\mathcal{P}_{n}}{\mathcal{P}_{n+1}}\right)  \left(  \frac{\mathcal{P}%
	_{n+2}}{\mathcal{P}_{n+1}}e^{\left(  n+1\right)  x}\right)  ^{\prime
}=ce^{\left(  n+1\right)  x}.
\]
That is,
\[
\left(  \mathcal{P}_{n}^{\prime}\mathcal{P}_{n+1}-\mathcal{P}_{n}%
\mathcal{P}_{n+1}^{\prime}\right)  \mathcal{P}_{n+2}-\mathcal{P}_{n}\left(
\mathcal{P}_{n+2}^{\prime}\mathcal{P}_{n+1}-\mathcal{P}_{n+2}\mathcal{P}%
_{n+1}^{\prime}+\left(  n+1\right)  \mathcal{P}_{n+2}\mathcal{P}_{n+1}\right)
=c\mathcal{P}_{n+1}^{3}.
\]
This can be written as
\[
\mathcal{P}_{n}^{\prime}\mathcal{P}_{n+2}-\mathcal{P}_{n}P_{n+2}^{\prime
}-\left(  n+1\right)  \mathcal{P}_{n}\mathcal{P}_{n+2}=c\mathcal{P}_{n+1}^{2}.
\]
If we normalize the polynomials $\mathcal{P}_{n}$ such that the highest order
term is $x^{n}.$ Then the constant $c$ satisfies
\[
-\left(  n+1\right)  =c
\]
We get the following recurrence relations
\begin{equation}
\mathcal{P}_{n}^{\prime}\mathcal{P}_{n+2}-\mathcal{P}_{n}P_{n+2}^{\prime
}-\left(  n+1\right)  \mathcal{P}_{n}\mathcal{P}_{n+2}+\left(  n+1\right)
\mathcal{P}_{n+1}^{2}=0.\label{re}%
\end{equation}

When we are given $\mathcal{P}_{n},\mathcal{P}_{n+1},$ the recurrence equation
$\left(  \ref{re}\right)  $ can be integrated, and we expect that the resulted
function $\mathcal{P}_{n+2}$ is a polynomial. However, in this step, we will
not get a free parameter in this polynomial, because solution of the
homogeneous equation has an exponential factor.

To show that integrating $\left(  \ref{re}\right)  $ indeed yields a
polynomial, we proceed to find the explicit formula of the sequence
$\mathcal{P}_{n}$ which satisfies $\left(  \ref{re}\right)  .$

Let us consider the sequence $a_{j}$ defined through the recurrence $a_{1}=0,$ and
$a_{j+1}=a_{j}+\frac{2}{j}.$ Let us define functions $\omega_{j}$ by
\begin{equation}
\omega_{j}=\left(  x-a_{j}\right)  \exp\left(  \left(  j-1\right)  x\right)
.\label{omega}%
\end{equation}
Then we define functions $\mathcal{P}_{n}$ through the Wronskian
\begin{equation}
\mathcal{P}_{n}:=c_{n}\exp\left(  -\frac{n\left(  n-1\right)  }{2}x\right)
W\left(  \omega_{1},...,\omega_{n}\right)  ,\label{Wron}%
\end{equation}
where
\begin{equation}
c_{n}=\left[  \left(  n-1\right)  !%
{\displaystyle\prod\limits_{1\leq i<j\leq n-1}}
\left(  j-i\right)  \right]  ^{-1}.\label{cons}%
\end{equation}
The normalizing constant $c_{n}$ is used to ensure that the highest order term
of $\mathcal{P}_{n}$ is $x^{n}.$ Note that $\mathcal{P}_{n}$ defined by
$\left(  \ref{Wron}\right)  $ are indeed polynomials of degree $n,$ and its
leading coefficient is a determinant of Vandermont type.

\begin{lemma}
	\label{threeterm}$\mathcal{P}_{n}$ defined by $\left(  \ref{Wron}\right)  $
	satisfies the three-term recurrence relation $\left(  \ref{re}\right)  .$
\end{lemma}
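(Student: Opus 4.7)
My plan is to reduce the three-term recurrence $(\ref{re})$ to an application of Abel's identity for the Wronskian of two solutions of a second-order linear ODE, and then to show by induction on $n$ that the Wronskian representation $(\ref{Wron})$ actually produces those two solutions via iterated generalized Darboux transformations.

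The starting point is the algebraic observation already extracted in the derivation preceding the lemma: $(\ref{re})$ is equivalent to the single identity
\[
\phi_1 \phi_2' - \phi_1' \phi_2 \;=\; (n+1)\, e^{(n+1)x},
\]
where $\phi_1 := \mathcal{P}_n/\mathcal{P}_{n+1}$ and $\phi_2 := (\mathcal{P}_{n+2}/\mathcal{P}_{n+1}) e^{(n+1)x}$. Since the coefficient of $\phi'$ in the ODE
\[
\phi'' + \bigl(2(\ln \mathcal{P}_{n+1})'' - (\ln \mathcal{P}_{n+1})'\bigr) \phi - (n+1) \phi' = 0
\]
equals $-(n+1)$, Abel's identity forces the Wronskian of any two of its solutions to be a constant multiple of $e^{(n+1)x}$. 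It is therefore enough to show that (a) $\phi_1$ and $\phi_2$ are linearly independent solutions of this ODE, and (b) the resulting Abel constant equals exactly $n+1$.

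Claim (a) I would establish by induction on $n$. The base case $n=1$ is already in the excerpt: $\bar\phi$ and $\phi^{*}$ are written down explicitly and verified to solve the stage-$1$ ODE. For the inductive step, assume that at stage $n$ we have two such linearly independent solutions of the correct factorized form. The generalized Darboux transformation quoted from \cite{Mat}, applied with seed $\phi_2$, produces a new ODE of the same structural form but with index $n \mapsto n+1$; its two solutions are given by the Crum iteration formula as ratios of consecutive Wronskians $W(\omega_1, \ldots, \omega_k)$. Pulling out the exponential prefactor $e^{-n(n-1)x/2}$ and the normalizing constant $c_n$ recasts the Crum representation as exactly $(\ref{Wron})$ applied to $\mathcal{P}_{n+3}$. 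Polynomiality of each $\mathcal{P}_n$ and the normalization of its leading coefficient to one are obtained by a direct inspection of the top-degree term of $W(\omega_1, \ldots, \omega_n)$: after factoring $e^{n(n-1)x/2}$, the leading polynomial coefficient is a Vandermonde-type determinant in $0, 1, \ldots, n-1$, and the constant $c_n$ in $(\ref{cons})$ is chosen precisely to normalize it to unity.

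The main obstacle is claim (b): propagating the Abel constant through the Darboux iteration and showing that it lands at exactly $n+1$ at stage $n$. At each step the Darboux transformation shifts the ODE potential by derivatives of $\ln(\phi_{\text{seed}})$, and the new Abel constant is pinned by the residue that appears when one integrates the first-order defining equation of the Crum step. This is precisely where the specific recursion $a_{j+1} - a_j = 2/j$ is forced: it is the unique sequence of shifts that propagates the Abel constant through the iteration as a linear function of the index while also preserving polynomiality of the output. Aligning these constants — verifying that the combinatorial factor $c_n c_{n+2}/c_{n+1}^{2}$ together with the exponential bookkeeping reproduces exactly the $(n+1)\, e^{(n+1)x}$ prescribed by Abel's identity — is expected to be the most delicate portion of the argument.
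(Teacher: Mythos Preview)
Your reduction of $(\ref{re})$ to the Abel identity $\phi_1\phi_2'-\phi_1'\phi_2=(n+1)e^{(n+1)x}$ is correct, but the plan to obtain it by an inductive Darboux/Crum iteration is a genuinely different route from the paper, and the piece you yourself flag as ``most delicate'' --- propagating the Abel constant and matching the Crum output to the specific Wronskians $W(\omega_1,\dots,\omega_k)$ --- is in fact the entire content of the lemma. Note that the first-order coefficient of the ODE shifts from $-(n+1)$ to $-(n+2)$ at each Darboux step, so the classical Crum formula (for Schr\"odinger operators with fixed first-order term) does not apply verbatim; you would have to set up and prove a non-standard Crum-type statement for this family, and that is not in the excerpt you cite.

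The paper avoids all of this by a direct, non-inductive Wronskian computation. Writing $W_k=W(\omega_1,\dots,\omega_k)$ and using $c_{n+1}^2/(c_nc_{n+2})=n+1$, the recurrence $(\ref{re})$ is reduced to
\[
W_n'W_{n+2}-W_nW_{n+2}'+nW_nW_{n+2}+(n+1)^2e^{x}W_{n+1}^2=0.
\]
This is then obtained in one shot from the Jacobi identity
\[
(W_k(\xi))'W_{k+1}-W_k(\xi)W_{k+1}'-W_{k+1}(\xi)W_k=0
\]
applied with $\xi=1$. The crucial input is the elementary identity $\omega_{j+1}''=j^{2}e^{x}\omega_j$, which is exactly where the choice $a_{j+1}-a_j=2/j$ enters; it forces $W_k(1)=(-1)^k((k-1)!)^2e^{(k-1)x}W_{k-1}$, and substituting this into the Jacobi identity with $k=n+1$ gives the displayed relation directly. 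Thus the role you assign to the sequence $(a_j)$ --- pinning the constants --- is realized in the paper by a single algebraic identity rather than by tracking constants through an iteration. Your approach is not wrong in principle, but it trades a two-line determinant identity for an induction whose non-standard Crum step you would still need to formulate and prove.
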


\begin{proof}
	For national simplicity, we write $W\left(  \omega_{1},...,\omega_{k}\right)
	$ as $W_{k}.$ Using $\left(  \ref{Wron}\right)  $ and the fact that
	\[
	\frac{c_{n+1}^{2}}{c_{n}c_{n+2}}=n+1,
	\]
	we see that to prove $\left(  \ref{re}\right)  ,$ it suffices to prove%
	\begin{equation}
	W_{n}^{\prime}W_{n+2}-W_{n}W_{n+2}^{\prime}+nW_{n}W_{n+2}+\left(  n+1\right)
	^{2}e^{x}W_{n+1}^{2}=0.\label{refi}%
	\end{equation}
	Following Adler-Moser \cite{Adler}, for any function $\xi,$ we define
	\[
	W_{k}\left(  \xi\right)  :=W\left(  \omega_{1},...,\omega_{k},\xi\right)  .
	\]
	Then we have the Jacobi identity(see \cite{Adler}, Lemma 1)
	\begin{equation}
	\left(  W_{k}\left(  \xi\right)  \right)  ^{\prime}W_{k+1}-W_{k}\left(
	\xi\right)  W_{k+1}^{\prime}-W_{k+1}\left(  \xi\right)  W_{k}=0\label{Jaco}%
	\end{equation}
	Direct computation tells us that%
	\[
	\omega_{j+1}^{\prime\prime}=j^{2}\omega_{j}\exp\left(  x\right)  .
	\]
	Using this relation and its differentiation and the fact that $\omega_1=x$, we obtain
	\[
	W_{k}\left(  1\right)  =\left(  -1\right)  ^{k}\left(  \left(  k-1\right)
	!\right)  ^{2}\exp\left[  \left(  k-1\right)  x\right]  W_{k-1}.
	\]
	We then compute
	\begin{align*}
	& \left(  -1\right)  ^{k}\left(  \left(  W_{k}\left(  1\right)  \right)
	^{\prime}W_{k+1}-W_{k}\left(  1\right)  W_{k+1}^{\prime}-W_{k+1}\left(
	1\right)  W_{k}\right)  \\
	& =\left(  \left[  \left(  k-1\right)  !\right]  ^{2}\exp\left[  \left(
	k-1\right)  x\right]  W_{k-1}\right)  ^{\prime}W_{k+1}\\
	& -\left(  \left(  k-1\right)  !\right)  ^{2}\exp\left[  \left(  k-1\right)
	x\right]  W_{k-1}W_{k+1}^{\prime}+\left(  k!\right)  ^{2}\exp\left[
	kx\right]  W_{k}^{2}.
	\end{align*}
	Dividing the right hand side by $\left[  \left(  k-1\right)  !\right]
	^{2}\exp\left(  k-1\right)  x,$ we get
	\[
	W_{k-1}^{\prime}W_{k+1}-W_{k-1}W_{k+1}^{\prime}+\left(  k-1\right)W_{k-1}W_{k+1}+k^{2}\exp\left(
	x\right)  W_{k}^{2}.
	\]
	By the Jacobi identity, this has to be zero. Letting $k=n+1,$ we get $\left(
	\ref{refi}\right)  .$ This finishes the proof.
\end{proof}

The conclusion of Theorem \ref{Theor} follows immediately from Lemma
\ref{threeterm} and the Darboux invariance property discussed above. Hence we
have abundant candidates of balancing configurations of multiple vortex rings.
In principle, the nondegeneracy of these configuration could be proved using
similar idea as that of \cite{Liu-Wei}. We leave this to a further study.

Finally, let us comment on the reason why we restrict to the case $m=n+1.$
Indeed, our original equation in Section 5 to be solved is%

\begin{equation}
P^{\prime\prime}Q-2P^{\prime}Q^{\prime}+PQ^{\prime\prime}+nP^{\prime
}Q-mPQ^{\prime}=0\label{mn}%
\end{equation}
Let $Q=x,$ $n=1$ and $m\geq3.$ Then the degree $m$ polynomial $P$ satisfying
$\left(  \ref{mn}\right)  $ necessarily has the factor $x^{3}.$ Hence $P$ and
$Q$ has a common root and can't be used in our construction. We conjecture that when $m-n>1$, there will be no balancing configurations satisfying our requirements stated in Section 5.

\bibliographystyle{plain}

\begin{thebibliography}{10}
	
	
	
\bibitem{Adler}	\textsc{M. Adler, J. Moser, } {\em On a class of polynomials connected with the Korteweg-de Vries equation}, Comm. Math. Phys. 61 (1978), no. 1, 1--30.
	
		\bibitem {Ruiz} \textsc{J. Bellazzini, D. Ruiz}, {\em Finite energy traveling waves for the Gross-Pitaevskii equation in the subsonic regime}, arXiv:1911.02820.
		
	
	\bibitem {Aref2}\textsc{H. Aref}, \emph{Point vortex dynamics: a classical
		mathematics playground}, J. Math. Phys., 48 (2007), no. 6, 065401, 23 pp.	
	
	\bibitem {Aref3}\textsc{H. Aref, P. K. Newton, M. A. Sremler, T. Tokieda and
		D. L. Vainchtein}, \emph{Vortex crystals}, Adv. Appl. Mech., 39 (2002), pp.~1--79.
	
	\bibitem{bethuelbrezishelein}
	\textsc{F. Bethuel, H. Brezis and F. Helein},
	{\em Ginzburg-Landau vortices},
	Progress in Nonlinear Differential Equations and their Applications,
	13. Birkhauser Boston, Inc. Boston Ma, 1994.
	
	
	\bibitem {Ber0} \textsc{F. Bethuel, P. Gravejat and J. C. Saut}, {\em On the KP I transonic limit of two-dimensional Gross-Pitaevskii travelling waves}, Dyn. Partial Differ. Equ. 5 (2008), no. 3, 241--280.
	
	
	\bibitem{bethuelgravejatsaut1}
	\textsc{F. Bethuel, P. Gravejat and J. Saut},
	{\em Existence and properties of travelling waves for the Gross-Pitaevskii equation},
	Contemp. Math., 473 (2008), 55-103.
	
	\bibitem {Ber1}\textsc{F. Bethuel, P. Gravejat and J. C. Saut},
	{\em Travelling waves for the Gross-Pitaevskii equation. II}, Comm. Math.
	Phys., 285 (2009), no. 2, 567-651.
	
	\bibitem {Ber2}\textsc{F. Bethuel, G. Orlandi and D. Smets}, {\em Vortex
		rings for the Gross-Pitaevskii equation}, J. Eur. Math. Soc. (JEMS), 6 (2004),
	no. 1, 17-94.
	
	\bibitem {Ber3}\textsc{F. Bethuel, J. C. Saut}, {\em Travelling waves for
		the Gross-Pitaevskii equation. I}, Ann. Henri Poincar\'{e}, 70 (1999), no. 2, 147-238.
	
	
	
	
	
	
	\bibitem{chiron}
	\textsc{D. Chiron},
	{\em Travelling waves for the Gross-Pitaevskii equation in dimension larger than two},
	{ Nonlinear Anal.} 58 (2004), no. 1-2, 175-204.
	
	\bibitem{ChironP} \textsc{D. Chiron, E. Pacherie}, {\em Smooth branch of travelling waves for the Gross-Pitaevskii equation in $\mathbb{R}^2$ for small speed}, arXiv:1911.03433.
	
	\bibitem{chiron2} \textsc{D. Chiron, M. Maris}, {\em Rarefaction pulses for the nonlinear Schr?dinger equation in the transonic limit}, Comm. Math. Phys. 326 (2014), no. 2, 329šC392.
	
	\bibitem{chiron3} \textsc{D. Chiron, C. Scheid}, {\em Multiple branches of travelling waves for the Gross-Pitaevskii equation}, Nonlinearity 31 (2018), no. 6, 2809--2853.
	
	\bibitem {Cla}\textsc{P. A. Clarkson}, \emph{Vortices and polynomials}, Stud.
	Appl. Math., 123 (2009), no. 1, pp.~37--62.
	
	\bibitem{del2006variational}
	\textsc{M. Del Pino, M. Kowalczyk and M. Musso}, {\em Variational reduction for Ginzburg -Landau vortices}, Journal of Functional Analysis, 239(2) (2006), 497-541.
	
	\bibitem {Fife}\textsc{P. C. Fife, L. A. Peletier}, {\em On the location
		of defects in stationary solutions of the Ginzburg-Landau equation in
		$\mathbb{R}^{2}$}, Quart. Appl. Math., 54 (1996), no. 1,85-104.
	
	\bibitem {G2}\textsc{P. Gravejat}, {\em A non-existence result for supersonic
		travellingwaves in the Gross-Pitaevskii equation}, Comm. Math. Phys., 243
	(2003),93-103.
	
	\bibitem {G3}\textsc{P. Gravejat}, {\em Limit at infinity and nonexistence
		results for sonic travelling waves in the Gross-Pitaevskii equation}, Differ.
	Int. Eqs., 17 (2004), 1213-1232.
	
	\bibitem{Jackson}
	\textsc{J.D.Jackson}, {\em Classical Electrodynamics.}, Wiley, New York, (1962).
	
	\bibitem{Jerrard2016LeapfroggingVR}
	\textsc{R. Jerrard, D. Smets}, {\em Leapfrogging vortex rings for the three dimensional gross-pitaevskii equationl}, Annals of PDE, 4 (2016), 1-48.
	
	
	\bibitem {J}\textsc{C. A. Jones, P. H. Roberts}, {\em Motion in a Bose
		condensate IV. Axisymmetric solitary waves}, J. Phys. A: Math. Gen., 15
	(1982), 2599-2619.
	
	\bibitem {J1}\textsc{C. A. Jones, S. J. Putterman and P. H. Roberts},
	{\em Motions in a Bose condensate V. Stability of solitary wave solutions of
		nonlinear Schrodinger equations in two and three dimensions}, J. Phys. A,
	Math. Gen., 19 (1986), 2991-3011.
	
	
	
	\bibitem {LFH}{F. H. Lin, J. C. Wei}, \emph{Traveling wave solutions of the
		Schrodinger map equation}, Comm. Pure Appl. Math., 63 (2010), no. 12, pp.~1585--1621.
	
	\bibitem{Liu-Wei}
	\textsc{Y. Liu, J. C. Wei}, {\em Multi-vortex traveling waves for the Gross-Pitaevskii equation and the Adler-Moser polynomials}, { SIAM J. Math. Anal.} 52 (2020), no. 4, 3546--3579 .
	
	
	\bibitem{Lou} {I. Loutsenko}, Integrable dynamics of charges related to the bilinear hypergeometric equation, Comm. Math. Phys. 242 (2003), no. 1-2, 251--275.
	
	\bibitem{Mar}  \textsc{M. Maris}, {\em Traveling waves for nonlinear Schrodinger equations with nonzero conditions at infinity}, Ann. of Math. (2) 178 (2013), no. 1, 107šC182.
	
	\bibitem{Mat}  \textsc{V. B. Matveev}, {\em Darboux transformation and explicit solutions of the Kadomtcev-Petviaschvily equation, depending on functional parameters}, Lett. Math. Phys. 3 (1979), no. 3, 213–216.
	
	\bibitem {Pacard}\textsc{F. Pacard, T. Riviere}, \emph{Linear and nonlinear
		aspects of vortices. The Ginzburg-Landau model}, Progress in Nonlinear
	Differential Equations and their Applications, 39. Birkhauser Boston, Inc.,
	Boston, MA, 2000.
	
	\bibitem{pethicksmith}
	\textsc{C. Pethick, H. Smith},
	{\em Bose-Einstein condensation in dilute gases}.
	Cambridge University Press, Cambridge, 2002. \bibitem {Her}\textsc{R. M. Herve and M. Herve}, \emph{Etude qualitative des
		solutions reelles d'une equation differentielle liee `a l'equation de
		Ginzburg--Landau}, Ann. Inst. H. Poincar\'{e} Anal. Non Lin\'{e}aire, 11
	(1994), no. 4,427-440.
	
\end{thebibliography}

\bigskip

\author{\noindent Weiwei Ao\\School of Mathematics and Statistics\\Wuhan University, Wuhan, Hubei, China \\Email: wwao@whu.edu.cn
	\\
	\text{}
	\\	Yehui Huang\\School of Mathematics and Physics, \\North China Electric Power University, Beijing, China,\\Email: yhhuang@ncepu.edu.cn
	\\
	\text{}
	\\	Yong Liu\\Department of Mathematics, \\University of Science and Technology of China, Hefei, China,\\Email: yliumath@ustc.edu.cn
	\\
	\text{}
	\\  Juncheng Wei\\Department of Mathematics, \\University of British Columbia, Vancouver, B.C., Canada, V6T 1Z2\\Email: jcwei@math.ubc.ca}

\end{document}